\newcommand{\lbd}{\lambda}
\newcommand{\Lbd}{\Lambda}
\begin{document}
\title{Bregman Augmented Lagrangian Method and Its Acceleration}

\author{
	Shen Yan\thanks{Department of Industrial and Enterprise Systems Engineering (ISE), University of Illinois at Urbana-Champaign (UIUC), Urbana, IL 61801, USA. Emails:
	\href{mailto:sheny2@illinois.edu}{sheny2@illinois.edu}, 
	\href{mailto:niaohe@illinois.edu}{niaohe@illinois.edu}. 
	}
	\and
	Niao He\footnotemark[1]
}
\date{}
\maketitle


\begin{abstract}
We study the Bregman Augmented Lagrangian method (BALM) for solving convex problems with linear constraints. For classical Augmented Lagrangian method, the convergence rate and its relation with the proximal point method is well-understood. However, the convergence rate for BALM has not yet been thoroughly studied in the literature. In this paper, we analyze the convergence rates of BALM in terms of the primal objective  as well as the feasibility violation. We also develop, for the first time, an accelerated Bregman proximal point method, that improves the convergence rate from $\cO(1/\sum_{k=0}^{T-1}\eta_k)$ to $\cO(1/(\sum_{k=0}^{T-1}\sqrt{\eta_k})^2)$, where $\{\eta_k\}_{k=0}^{T-1}$ is the sequence of proximal parameters. When applied to the dual of linearly constrained convex programs, this leads to the construction of an accelerated BALM, that achieves the improved rates for both primal and dual convergences. 
\end{abstract}

\section{Introduction}
In this paper, we re-examine the general class of convex programs with linear constraints, including equality constrained problems:
\begin{equation}\label{eq:linear-equality-problem}
    \min_{x \in \cX} \; f(x)\quad {\rm s.t.} \;A x = b, 
\end{equation}
and inequality constrained problems:
\begin{equation}\label{eq:linear-inequality-problem}
    \min_{x \in \cX} \; f(x)\quad {\rm s.t.} \; A x \leq b,
\end{equation}
where $f(x)$ is a closed and convex function, $\cX\subset\RR^n$ is a closed and convex set, and $A\in\RR^{m\times n}$, $b\in\RR^m$ are given matrix and vector. Such problems occur pervasively in machine learning, signal processing, and many other engineering fields, including basis pursuit, support vector machine, distributed learning, and finding the optimal policy for Markov decision problems.

The classical augmented Lagrangian method (ALM), originally introduced in~\cite{hestenes1969multiplier,powell1969method}, has been one of the most fundamental and popular algorithms for solving problems with linearly constrained convex programs; see e.g,~\cite{bertsekas2014constrained} for a comprehensive overview. Particularly for \eq{eq:linear-equality-problem} and \eq{eq:linear-inequality-problem}, the key steps for ALM are as simple as follows:
\begin{align}
  x_{k+1} &=\begin{cases} \arg \min_{x\in \mathcal{X}}\{f(x) + \lambda_k^\top (Ax-b) + \frac{\eta_k}{2} \|Ax-b\|^2_2\}, & \text{ in case of }\eq{eq:linear-equality-problem} \\
  \arg \min_{x\in \mathcal{X}}\{f(x) + \frac{1}{2\eta_k}\|[\lambda_k + \eta_k(Ax-b)]_+\|^2_2\}, & \text{ in case of } \eq{eq:linear-inequality-problem}
  \end{cases}\\
  \lambda_{k+1} &= \begin{cases}\lambda_{k} + \eta_k (Ax_{k+1}-b), & \text{ in case of } \eq{eq:linear-equality-problem} \\
  [\lambda_{k} + \eta_k (Ax_{k+1}-b)]_+, &\text{ in case of }\eq{eq:linear-inequality-problem}
  \end{cases}
\end{align}
where $\eta_k$ is the proximal parameter at step $k$ and $[x]_+$ stands for taking $\max(x,0)$ for each element.  It is well-known that ALM can be interpreted as applying the proximal point method on the Lagrangian dual~\cite{rockafellar1976augmented}. Defining the Lagrange function as $L(x,\lambda)=f(x)+\lambda^\top (Ax-y)$, the proximal minimization perspective allows us to write the ALM for \eq{eq:linear-equality-problem} and \eq{eq:linear-inequality-problem} in a unified way:
\begin{equation}\label{eq:ALM}
(x_{k+1},\lambda_{k+1})\in\argmax_{\lambda\in\Lambda}\min_{x\in\cX}\left\{ L(x,\lambda)-\frac{1}{2\eta_k}\|\lambda-\lambda_k\|_2^2\right\},
\end{equation}
where $\Lambda = \mathbb{R}^m$ for equality-constrained problem \eq{eq:linear-equality-problem}, and  $\Lambda = \mathbb{R}^m_+$ for inequality-constrained problem \eq{eq:linear-inequality-problem}. Although we focus on linearly constrained problems in this paper, it is worth mentioning that all the results in this paper can be immediately generalized to convex functional constrained problems: $\min_{x \in \cX} \; f(x),\;\; {\rm s.t.} \; G(x) \leq 0$, where $G(x) = (g_1(x), g_2(x), \cdots, g_m(x))$, and $\{g_i(x)\}_{i=1,2,\cdots,m}$ are closed and convex.

The convergence of ALM has been extensively studied in the literature; due to the large volume of literature on this topic, we only list a few results here. The asymptotic convergence in the convex case was provided in \cite{rockafellar1976augmented} from the proximal minimization viewpoint. Understanding the non-asymptotic convergence of ALM and the iteration complexity of  its inexact variations has been the main focus in several recent works. For example, for the linear equality constrained problem \eq{eq:linear-equality-problem}, \cite{lan2016iteration} analyzed the convergence rate for the primal problem, assuming the subproblems are only approximately solved through some first-order subroutines. \cite{xu2017iteration} generalized the results to include both equality and inequality constrained problems \eq{eq:linear-inequality-problem}. There also exist recent works applying ALM to non-convex problems \cite{chen2017augmented, sahin2019inexact}. Moreover, analysis for other variants of ALM also exist, e.g., the linearized augmented Lagrangian method \cite{ouyang2013stochastic, xu2017accelerated}. Besides, ALM and its variants are also widely used in many applications, such as Lasso problems \cite{li2018highly}, sparse principle component analysis \cite{lu2012augmented}, semidefinite programming\cite{yang2015sdpnal}, etc.

As for the acceleration of the ALM, there currently exist two such schemes, which can be derived from applying G\"uler's 1st and 2nd accelerated proximal point methods \cite{guler1992new, kim2019accelerated} to the dual problem, respectively. See \cite{beck2009fast, tseng2010approximation, lin2018catalyst, nedelcu2014computational} and \cite{he2010acceleration, ke2017accelerated, kang2013accelerated} for more details on each scheme. Most of these works only proved an accelerated convergence rate of the dual problem, instead of the primal convergence. For example, \cite{he2010acceleration, ke2017accelerated, kang2013accelerated} applied the G\"uler's 2nd accelerated scheme  to the dual problem, and showed that the Lagrangian residual satisfies $L(x^*,\lambda^*)-L(x_T,\lambda_T)\leq \cO(1/T^2)$, where $(x^*,\lambda^*)$ corresponds to the optimal solution and Lagrange multiplier. Notice that this only implies an accelerated rate for the dual convergence. In fact, generally speaking,  this algorithm could fail to attain the same accelerated $\cO(1/T^2)$ rate in terms primal convergence. We provide one such example in the \hyperref[appendix]{Appendix}\footnotemark.
\footnotetext{We show that for simple linear programs, while the dual objective is proved to converge in the rate of $\cO(1/T^2)$, the primal objective and constraint violation converges in the rate of $\cO(1/T)$.} As far as we know, \cite{nedelcu2014computational} first demonstrated the  $\cO(1/T^2)$ rate of primal convergence for linear equality constrained problem, by applying Nesterov's accelerated dual average method \cite{devolder2014first, nesterov2005smooth} to the augmented Lagrangian dual problem. Indeed, this algorithm is equivalent to using G\"uler's 1st accelerated proximal point method to the dual problem\footnotemark.
\footnotetext{This equivalence was not explicitly mentioned in the original paper, and we prove it in the \hyperref[appendix]{Appendix}.}


On the other hand, the formulation \eq{eq:ALM} naturally leads to the generalization of \emph{Bregman Augmented Lagrangian Method} (we refer to BALM for short), where the Euclidean distance is replaced with a general Bregman divergence. This can also be seen as a direct application of the Bregman proximal point algorithm (BPP) to the dual problem, which was originally introduced in~\cite{censor1992proximal,teboulle1992entropic}. Let $h(\lambda):\text{int}(\Lambda)\to\RR$ be a strictly convex, continuously differential function on the interior set $\text{int}(\Lambda)$. The Bregman divergence induced by $h$ is given by $D_h(\lambda,\lambda')=h(\lambda)-h(\lambda')-\langle\nabla h(\lambda'),\lambda-\lambda'\rangle$, which is nonnegative and strictly convex. Thus, the key steps of BALM can be viewed as follows:
\begin{equation}\label{eq:BALM}
(x_{k+1},\lambda_{k+1})\in\argmax_{\lambda\in\Lambda}\min_{x\in\cX}\left\{ L(x,\lambda)-\frac{1}{\eta_k}D_h(\lambda,\lambda_k)\right\}.
\end{equation}
One of the most important advantages of using a Bregman divergence as opposed to the Euclidean distance is that the objective of the subproblems becomes twice-differentiable \cite{teboulle2018simplified}. The use of Bregman divergence also allows more flexibility to exploit the geometry of dual domain $\Lambda$, especially for the linear inequality constrained case. The advantages of BALM have also been observed empirically in practice; see e.g.~\cite{yuan2017bregman} for image segmentation applications.  

However, on the theoretical side, the convergence of BALM has only been studied in few works. The asymptotic convergence is proven in~\cite{eckstein1993nonlinear,chen1993convergence} and  \cite{auslender2006interior} when considering generalized Bregman functions.  To the authors' knowledge, the non-asymptotic convergence rate of BALM is still absent in the literature, especially in terms of the original objective and constraint violation of the primal sequences. Moreover, while accelerated BALM and accelerated proximal point algorithm~\cite{guler1992new} has been established in the Euclidean setting, it remains unclear whether such acceleration schemes can be extended to BALM with general Bregman divergences and whether faster convergence rates can be achieved, especially in terms of the primal convergence.

\paragraph{Our contributions} In this paper, we aim to close these theoretical gaps and make the following key contributions. 
\begin{enumerate}
\item Firstly, we establish the ergodic convergence rate of BALM both in terms of the primal optimality and feasibility violation, when solving the linearly constrained convex problems \eq{eq:linear-equality-problem} and \eq{eq:linear-inequality-problem}. Specifically, we show that $|f(\tilde{x}_T)-f(x^*)|\leq \cO(1/\sum_{k=0}^{T-1}\eta_k)$ and $\|A\tilde{x}_T-b\|, \|[A\tilde{x}_T-b]_+\| \leq \cO(1/\sum_{k=0}^{T-1}\eta_k)$ for \eq{eq:linear-equality-problem} and \eq{eq:linear-inequality-problem}, respectively, where $\{\eta_k\}_{k=0}^{T-1}$ are arbitrary positive proximal parameters. Our proof technique is much simpler than existing ones for classical ALM.
\item Secondly, we develop, for the first time, a general accelerated scheme for Bregman proximal point algorithm (acc-BPP) for convex minimization problems. This acceleration scheme is largely inspired from~G\"uler~\cite{guler1992new} with new treatments and leverages Bregman divergences that satisfy the triangle-scaling property~\cite{hanzely2018accelerated}. Comparing to the BPP~\cite{censor1992proximal,teboulle1992entropic}, acc-BPP improves the convergence rate from $\cO(1/\sum_{k=0}^{T-1}\eta_k)$ to $\cO(1/(\sum_{k=0}^{T-1}\sqrt{\eta_k})^2)$, where $\{\eta_k\}_{k=0}^{T-1}$ is the sequence of proximal parameters. We also derive several simple variants of acc-BPP based on the general accelerated scheme, some of which benefit from a different proof of convergence. 

\item Lastly, we apply the acc-BPP algorithm to the duals of the linearly constrained convex optimization problems \eq{eq:linear-equality-problem} and \eq{eq:linear-inequality-problem}, leading to the accelerated Bregman augmented Lagrangian method (acc-BALM). We demonstrate that acc-BALM achieves a faster convergence rate than BALM. In particular, when choosing constant proximal parameters, the primal convergence rate of acc-BALM is $\cO\left(\ln(T)/T^2\right)$ while the rate of BALM is $\cO\left(1/T\right)$. Moreover, our result generalizes current accelerated ALM schemes, and can be applied to both equality and inequality constrained problems. We believe this is the first primal convergence analysis with acceleration for problems under  inequality constraints.
\end{enumerate}

 \paragraph{Related work} We point out that there exist several other extensions of ALM that also exploit Bregman divergences, which are completely different from BALM. For example, the Bregman ADMM algorithm introduced in~\cite{wang2014bregman} simply replaces the Euclidean distance in the primal update with Bregman divergence while keeping the same update for the dual variable. \cite{zhao2015adaptive} instead directly adds an adaptive Bregman divergence to the linearized augmented Lagrangian function at each iteration as a regularization. Our accelerated Bregman proximal point algorithm (acc-BPP) appears to be related to the accelerated Bregman proximal gradient methods (acc-BPG)~\cite{auslender2006interior,tseng2008accelerated,hanzely2018accelerated,gutman2018unified} designed for minimizing the sum of a (relatively) smooth convex function and a non-differential convex function. The latter can be viewed as an extension of Nesterov's accelerated gradient method~\cite{nesterov1988approach} with Bregman proximal mappings. However, there are notable differences. First of all, the acceleration schemes used in these two algorithms are quite distinct from each other. Secondly, the parameter used in the proximal mappings are different, acc-BPG uses fixed parameters (depending on the relative smoothness) while acc-BPP allows arbitrary parameter values, leading to different characterizations of the convergence rates.

 The rest of the paper is organized as follows. In Section~\ref{sec:prelim}, we provide some preliminaries on BALM and BPP, as well as existing convergence results. In Section~\ref{sec:ergodic convergence}, we establish the first ergodic convergence rate of BALM of the primal sequences. In Section~\ref{sec:acc-BPP}, we develop a generic accelerated scheme for BPP and provide its convergence analysis. In Section~\ref{sec:acc-BPP-variants}, we introduce two simple variants of the acc-BPP algorithm and present a different convergence proof from dual averaging perspective. In Section~\ref{sec:acc-BALM}, we derive the accelerated BALM algorithm and prove its improved primal convergence rate. Finally, in Section~\ref{sec:experiments}, we conduct some numerical experiments to compare the performance of BPP vs. acc-BPP, and BALM vs. acc-BALM, which further validate our theoretical findings.

\section{Preliminaries: augmented Lagrangian and proximal point methods}\label{sec:prelim}
In this section, we review some basics of (Bregman) ALM and (Bregman) proximal point algorithms. The connection of these two algorithms has been well established in the literature; see e.g., \cite{rockafellar1976augmented} and \cite{eckstein1993nonlinear}, respectively for the original ALM and BALM. For simplicity of exposition, we will focus mainly on linearly constrained convex programs, described in \eq{eq:linear-equality-problem} and \eq{eq:linear-inequality-problem}.

The Lagrangian dual of linearly constrained convex programs can be written in the form of 
\begin{equation}\label{eq:dual_problem}
\max_{\lambda\in\Lambda}\; d(\lambda), \text{ where } d(\lambda) = \min_{x\in \mathcal{X}}\;\{ L(x,\lambda):=f(x) + \lambda^\top (Ax-b)\},
\end{equation}
where $\Lambda=\RR^m$ for \eq{eq:linear-equality-problem} or $\Lambda=\RR^m_+$ for \eq{eq:linear-inequality-problem}. It has been shown in \cite{rockafellar1976augmented} and \cite{eckstein1993nonlinear} that the (Bregman) ALM can be viewed as applying the (Bregman) proximal point algorithm to the Lagrangian dual problem \eq{eq:dual_problem}. 

Let $h$ be a proper, continuously differentiable, and strictly convex function on $\Lambda\subseteq\RR^m$. The Bregman divergence induced by function $h$ is given as follows:
\[D_h(\lbd,\tilde{\lbd}) := h(\lbd) - h(\tilde{\lbd}) - \nabla h(\tilde{\lbd})^\top (\lbd-\tilde{\lbd}) \quad \forall \lbd\in \Lbd, \tilde{\lbd}\in \Lbd.\] 
By strict convexity, $D_h(\lbd,\tilde{\lbd}) \geq 0$, and $D_h(\lbd,\tilde{\lbd})=0$ only when $\lbd=\tilde{\lbd}$. For example, when $\Lambda=\RR^m$, the simplest choice of Bregman divergence is $D_h(\lbd,\tilde{\lbd}) = \frac{1}{2}\|\lbd-\tilde{\lbd}\|_2^2$, where $h(\lbd) = \frac{1}{2}\|\lbd\|_2^2$; when $\Lambda=\RR^m_+$, a common choice of Bregman divergence is $D_h(\lbd,\tilde{\lbd}) = \sum_i \left (\lbd^{(i)}\log\lbd^{(i)}-\lbd^{(i)}\log{\tilde{\lbd}^{(i)}} - \lbd^{(i)} + \tilde{\lbd}^{(i)} \right)$, where $h(\lbd) = \sum_i \lbd^{(i)}\log \lbd^{(i)}$. We also list the well-known three-point identity property~\cite{chen1993convergence} of Bregman divergence, which will be heavily used in the analysis: $\forall\lbd_1,\lbd_2,\lbd_3\in\Lambda$,
\begin{equation}\label{eq:threepoint}
D_h(\lbd_1,\lbd_2)+D_h(\lbd_2,\lbd_3)-D_h(\lbd_1,\lbd_3)=\langle\nabla h(\lbd_2)-\nabla h(\lbd_3), \lbd_2-\lbd_1\rangle.
\end{equation}
See \cite{censor1992proximal, chen1993convergence,auslender2006interior} for a more detailed discussion on Bregman divergences.

\DontPrintSemicolon
\begin{algorithm}[t]
\KwIn{$\lambda_0 \in \Lambda, \{\eta_k\}_{k\geq0}$}
\nl \For{$k\geq 0$}{
\nl $x_{k+1} \in \arg\min_{x\in \mathcal{X}}\{f(x) + \max_{\lambda\in \Lambda}\{\lambda^\top (Ax-b) - \frac{1}{\eta_k}D_h(\lambda, \lambda_k)\}\}$\;
\nl $\lambda_{k+1} = \arg\max_{\lambda\in \Lambda}\{\lambda^\top (Ax_{k+1}-b)-\frac{1}{\eta_k}D_h(\lambda, \lambda_k)\} $\;
}
    \caption{{\bf Bregman Augmented Lagrangian Method (BALM)} \label{alg:BALM}}
\end{algorithm}

Specifically, the Bregman proximal point (BPP) method~\cite{censor1992proximal, chen1993convergence} for solving the Lagrange dual problem follows the recursion:
\begin{align}
    \lambda_{k+1} := \arg\max_{\lambda \in \Lambda}\left\{d(\lambda) - \frac{1}{\eta_k}D_h(\lambda, \lambda_k)\right\}.\label{eq:BPPM}
\end{align}
The operation defined in \eq{eq:BPPM} is also known as the \emph{Bregman proximal operator}. 
Recall that $d(\lambda) = \min_{x\in \mathcal{X}} \{f(x) + \lambda^\top (Ax-b)\}$ is the Lagrange dual function. Solving \eq{eq:BPPM} reduces to solving the convex-concave saddle point problem 
\begin{align}\label{eq:saddle_point_problem}
\max_{\lambda \in \Lambda}\min_{x\in\cX}\;\left\{\Phi_{\eta_k}(x,\lambda;\lambda_k):=f(x)+\lambda^\top (Ax-b) - \frac{1}{\eta_k}D_h(\lambda, \lambda_k)\right\}.
\end{align}
Assuming that both $f$ and $h$ are coercive, based on  convex analysis theory~\cite{rockafellar2015convex,ekeland1999convex}, problem \eq{eq:saddle_point_problem} possesses a saddle point, denoted as $(x_{k+1},\lambda_{k+1})$, such that 
$$\Phi_{\eta_k}(x_{k+1},\lambda;\lambda_k)\leq\Phi_{\eta_k}(x_{k+1},\lambda_{k+1};\lambda_k)\leq\Phi_{\eta_k}(x,\lambda_{k+1};\lambda_k),$$ for any $x\in\cX, \lambda\in\Lbd$. Thus, $\lambda_{k+1}=\argmax_{\lambda\in\Lambda} \Phi_{\eta_k}(x_{k+1},\lambda;\lambda_k)$. The Bregman ALM (BALM), described in \algref{alg:BALM}, can be interpreted as iteratively computing the saddle point of the sequence of subproblems \eq{eq:saddle_point_problem}, or consequently, computing the Bregman proximal operator \eq{eq:BPPM}.

In particular, when $\Lambda=\RR^m$ and the image of the gradient of $h$ satisfies that $\text{Im}(\nabla h)=\RR^m$, the updates in BALM can be simplified as follows~\cite{eckstein1993nonlinear}: 
\begin{align*}
x_{k+1}&\in\argmin_{x\in\cX}\left\{f(x)+\frac{1}{\eta_k}h^*(\nabla h(\lambda_k)+\eta_k(Ax-b)\right\};\\
\lbd_{k+1}&=\nabla h^*(\nabla h(\lbd_k)+\eta_k(Ax_{k+1}-b)),
\end{align*}
where $h^*$ is the convex conjugate of the function $h$. Particularly, when setting $h(\lambda)=\frac{1}{2}\|\lambda\|_2^2$, this leads to the classical ALM; when $h(\lambda) = \sum_{i=i}^m \lambda_i \log(\lambda_i)$ leads to the exponential multiplier method \cite{tseng1993convergence}. For various other examples of BALM, please see~\cite{teboulle1992entropic,iusem1999augmented} and references therein.  Finally, we point out that the convergence analysis of BPP in \eq{eq:BPPM} has been well-studied \cite{censor1992proximal, chen1993convergence}. We list some results below for completeness. 

\begin{lemma}[\cite{chen1993convergence}]\label{lem:convergence_PPA} Let $\{\lambda_k\}_{k\geq0}$ be a sequence generated from \eq{eq:BPPM} with positive parameters $\{\eta_k\}_{k\geq 0}$. Let $\lambda^*\in\Lambda$ be an optimal solution to \eq{eq:dual_problem}. The following holds:
\begin{enumerate}
  \item [(a)] $\eta_k(d(\lbd)-d(\lbd_{k+1}))\leq D_h(\lbd,\lbd_{k})-D_h(\lbd,\lbd_{k+1})-D_h(\lbd_{k+1},\lbd_k)$, $\forall \lbd\in\Lambda$;
  \item[(b)] $d(\lbd_k)$ is non-decreasing;
  \item[(c)] $D_h(\lbd^*,\lbd_k)$ is non-increasing; 
  \item[(d)] $d(\lbd^*)-d(\lbd_T)\leq\frac{D_h(\lbd^*,\; \lbd_{0})}{\sum_{k=0}^{T-1}\eta_k}$; if $\sum_{k=0}^\infty\eta_k=\infty$, then $d(\lbd_k)\to d(\lbd^*)$ as $k\to\infty$. 
\end{enumerate}
\end{lemma}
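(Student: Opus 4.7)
The plan is to prove (a) first from the optimality condition for $\lambda_{k+1}$ and then derive (b), (c), and (d) as direct corollaries by specializing $\lambda$ in (a) to $\lambda_k$ and $\lambda^*$ respectively. The main technical tool throughout will be the three-point identity \eq{eq:threepoint} for Bregman divergences together with the concavity of the dual function $d$.

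For part (a), I would start by writing down the first-order optimality condition for the update \eq{eq:BPPM}. Since $\lambda_{k+1}$ maximizes the concave function $\eta_k d(\lambda) - D_h(\lambda, \lambda_k)$ over $\Lambda$, there exists a supergradient $g_{k+1} \in \partial d(\lambda_{k+1})$ (of the concave function $d$) such that for all $\lambda \in \Lambda$,
\[
\langle \eta_k g_{k+1} - (\nabla h(\lambda_{k+1}) - \nabla h(\lambda_k)),\; \lambda - \lambda_{k+1}\rangle \leq 0.
\]
Concavity of $d$ gives $d(\lambda) - d(\lambda_{k+1}) \leq \langle g_{k+1}, \lambda - \lambda_{k+1}\rangle$. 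Multiplying by $\eta_k > 0$ and substituting the inequality above yields
\[
\eta_k(d(\lambda) - d(\lambda_{k+1})) \leq \langle \nabla h(\lambda_{k+1}) - \nabla h(\lambda_k),\; \lambda - \lambda_{k+1}\rangle.
\]
Applying the three-point identity \eq{eq:threepoint} with $(\lambda_1,\lambda_2,\lambda_3) = (\lambda, \lambda_{k+1}, \lambda_k)$ rewrites the right-hand side exactly as $D_h(\lambda,\lambda_k) - D_h(\lambda,\lambda_{k+1}) - D_h(\lambda_{k+1},\lambda_k)$, which establishes (a).

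Parts (b), (c), and (d) then follow by standard substitutions. For (b), I plug $\lambda = \lambda_k$ into (a); the right-hand side becomes $-D_h(\lambda_k,\lambda_{k+1}) - D_h(\lambda_{k+1},\lambda_k) \leq 0$, which together with $\eta_k > 0$ gives $d(\lambda_{k+1}) \geq d(\lambda_k)$. For (c), I plug $\lambda = \lambda^*$ into (a); since $d(\lambda^*) \geq d(\lambda_{k+1})$ the left-hand side is nonnegative, yielding $D_h(\lambda^*,\lambda_{k+1}) \leq D_h(\lambda^*,\lambda_k) - D_h(\lambda_{k+1},\lambda_k) \leq D_h(\lambda^*,\lambda_k)$. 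For the rate in (d), I again set $\lambda = \lambda^*$ in (a) and telescope from $k=0$ to $T-1$, dropping the nonnegative terms $D_h(\lambda_{k+1},\lambda_k)$ and $D_h(\lambda^*,\lambda_T)$ to obtain $\sum_{k=0}^{T-1}\eta_k(d(\lambda^*) - d(\lambda_{k+1})) \leq D_h(\lambda^*,\lambda_0)$. Using the monotonicity from (b), each term $d(\lambda^*) - d(\lambda_{k+1}) \geq d(\lambda^*) - d(\lambda_T)$, giving the stated bound; the asymptotic convergence claim is immediate once $\sum_k \eta_k = \infty$.

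No serious obstacle is anticipated, since the entire argument reduces to one application of the optimality condition plus the three-point identity and then routine specializations. The only step that requires mild care is the optimality condition in the first display, where the Bregman divergence is differentiated with respect to its first argument (yielding $\nabla h(\lambda_{k+1}) - \nabla h(\lambda_k)$), and the normal-cone term is handled implicitly by restricting $\lambda \in \Lambda$; this is standard once $h$ is smooth and strictly convex as assumed.
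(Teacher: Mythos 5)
Your proof is correct and follows exactly the route the paper sketches: part (a) via the first-order optimality condition for \eq{eq:BPPM} combined with the three-point identity \eq{eq:threepoint}, and parts (b)--(d) by the substitutions $\lambda=\lambda_k$, $\lambda=\lambda^*$, and telescoping together with the monotonicity from (b). No gaps; the details you supply are the standard ones the paper leaves implicit.
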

The result (a) can be obtained directly from the optimality condition of \eq{eq:BPPM} and the three-point identity of the Bregman divergence; (b) follows from (a) by setting $\lbd=\lbd_k$; (c) follows from (a) by setting $\lbd=\lbd^*$; and (d) can be obtained by taking the telescoping sum over (a). Moreover, it can be shown that the sequence $\{\lbd_k\}_{k\geq 0}$ converges to some optimal solution $\lbd^*$. Note that the above results hold true for general convex problems in the form of $\max_{\lbd\in\Lbd}d(\lbd)$. 

Lemma \eq{lem:convergence_PPA} immediately implies the convergence of the dual sequence of BALM for solving the linear constrained convex programs. However, establishing the convergence of the primal sequence, both in terms of the optimality and feasibility, still remains elusive.

\section{Ergodic convergence  of BALM}\label{sec:ergodic convergence}
In this section, we provide the ergodic convergence rate analysis of BALM when subproblems are solved exactly. Throughout, we make the following regularity assumptions:
\begin{assumption}
We assume that
\begin{enumerate}
\item The objective function $f(x)$ is closed, convex, and coercive. The set $X$ is also closed and convex. The primal problem \eq{eq:linear-equality-problem} (or \eq{eq:linear-inequality-problem} resp. for inequality constrained case) and its dual \eq{eq:dual_problem} are solvable, and strong duality holds.
\item The function $h$ is a proper, coercive, continuously differentiable, and strictly convex function on $\Lambda\subseteq\RR^m$, where $\Lambda=\RR^m$ for \eq{eq:linear-equality-problem} or $\Lambda=\RR^m_+$ for \eq{eq:linear-inequality-problem}.
\end{enumerate} 
\end{assumption}{}
Denote $(x^*,\lambda^*)\in\cX\times\Lambda$ as a pair of optimal solution and Lagrange multiplier that satisfies the underlying KKT condition.  Thus, $(x^*,\lambda^*)$ is also a saddle point to the Lagrange function $L(x,\lambda)$ on $\cX\times\Lbd$, and satisfies that 
   $ L(x, \lambda^*) - L(x^*, \lambda) \geq 0, \forall x\in \mathcal{X}, \lambda \in \Lambda. $
Note that since $\Lambda = \mathbb{R}^{m}$ or  $\Lambda=\RR^m_+$, one can choose $\lambda=0$, and the above inequality implies that 
\begin{align}
    f(x) - f(x^*) + {\lambda^*}^\top (Ax-b) \geq 0, \quad \forall x\in \mathcal{X}. \label{eq:VIreduced}
\end{align}
This inequality will be used later in the convergence analysis. The next lemma characterizes the one-step behavior of the algorithm,
\begin{lemma} \label{lma:1step}
We have for any $x\in \mathcal{X},\lambda \in \Lambda$,
\begin{align}
    &L(x_{k+1}, \lambda) - L(x, \lambda_{k+1}) 
    \leq \frac{1}{\eta_k}(\nabla h(\lambda_{k+1}) - \nabla h(y_k))^\top (\lambda - \lambda_{k+1}).
\end{align}
Moreover, 
\begin{align}
 L(x_{k+1}, \lambda) - L(x, \lambda_{k+1}) \leq \frac{1}{\eta_k}( D_h(\lambda, \lambda_k) - D_h(\lambda, \lambda_{k+1})).   
\end{align}
\end{lemma}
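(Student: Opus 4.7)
The plan is to exploit two facts about the saddle point $(x_{k+1}, \lambda_{k+1})$ of $\Phi_{\eta_k}(x,\lambda;\lambda_k)$: (i) the inner minimization inequality $\Phi_{\eta_k}(x_{k+1},\lambda_{k+1};\lambda_k)\leq \Phi_{\eta_k}(x,\lambda_{k+1};\lambda_k)$, which upon cancellation of the common Bregman term yields $f(x_{k+1})-f(x)\leq \lambda_{k+1}^\top A(x-x_{k+1})$ for every $x\in\mathcal{X}$; and (ii) the first-order optimality condition for $\lambda_{k+1}=\arg\max_{\lambda\in\Lambda}\Phi_{\eta_k}(x_{k+1},\lambda;\lambda_k)$, i.e., the variational inequality
\[
\bigl(Ax_{k+1}-b-\tfrac{1}{\eta_k}(\nabla h(\lambda_{k+1})-\nabla h(\lambda_k))\bigr)^\top(\lambda-\lambda_{k+1})\leq 0,\qquad \forall\,\lambda\in\Lambda.
\]

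To prove the first inequality, I would expand
\[
L(x_{k+1},\lambda)-L(x,\lambda_{k+1})=[f(x_{k+1})-f(x)]+\lambda^\top(Ax_{k+1}-b)-\lambda_{k+1}^\top(Ax-b),
\]
then substitute the bound from (i) for $f(x_{k+1})-f(x)$. After straightforward rearrangement the cross terms collapse and what remains is exactly $(\lambda-\lambda_{k+1})^\top(Ax_{k+1}-b)$. Applying (ii) to this quantity gives
\[
L(x_{k+1},\lambda)-L(x,\lambda_{k+1})\leq \tfrac{1}{\eta_k}(\nabla h(\lambda_{k+1})-\nabla h(\lambda_k))^\top(\lambda-\lambda_{k+1}),
\]
which is the first claim (reading the $y_k$ in the statement as $\lambda_k$).

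For the second inequality I would invoke the three-point identity \eqref{eq:threepoint} with the triple $(\lambda,\lambda_{k+1},\lambda_k)$, giving
\[
(\nabla h(\lambda_{k+1})-\nabla h(\lambda_k))^\top(\lambda_{k+1}-\lambda)=D_h(\lambda,\lambda_{k+1})+D_h(\lambda_{k+1},\lambda_k)-D_h(\lambda,\lambda_k).
\]
Negating and using $D_h(\lambda_{k+1},\lambda_k)\geq 0$ upgrades the first inequality to $L(x_{k+1},\lambda)-L(x,\lambda_{k+1})\leq \frac{1}{\eta_k}(D_h(\lambda,\lambda_k)-D_h(\lambda,\lambda_{k+1}))$.

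There is no real obstacle here; the argument is essentially an algebraic combination of the saddle-point optimality conditions with the three-point identity. The only subtle point I would double-check is that (ii) holds on the (possibly non-open) set $\Lambda$: since $h$ is strictly convex, continuously differentiable, and $\Lambda\in\{\mathbb{R}^m,\mathbb{R}^m_+\}$, the standard variational-inequality form of the optimality condition applies, so this is a minor bookkeeping matter rather than a conceptual hurdle.
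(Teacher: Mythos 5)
Your proof is correct and follows essentially the same route as the paper's: the dual-variable step (variational inequality for $\lambda_{k+1}$ plus the three-point identity and $D_h(\lambda_{k+1},\lambda_k)\ge 0$) is identical, and your primal step uses the function-value minimality of $x_{k+1}$ in $\Phi_{\eta_k}(\cdot,\lambda_{k+1};\lambda_k)$ where the paper invokes the subgradient optimality condition together with convexity of $f$ — two equivalent ways of obtaining $f(x_{k+1})-f(x)+(x_{k+1}-x)^\top A^\top\lambda_{k+1}\le 0$. Your reading of $y_k$ as $\lambda_k$ in the statement is also the intended one for \algref{alg:BALM} (the $y_k$ form is what gets reused for the accelerated variant).
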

\begin{proof}
As discussed earlier in Section~\ref{sec:prelim}, BALM is equivalent to solving a convex-concave saddle point problem \eq{eq:saddle_point_problem} each step, thus we have the following optimality conditions 
\begin{align}
  &(x-x_{k+1})^\top (g_{k+1} + A^\top \lambda_{k+1}) \geq 0, \quad \forall x \in \mathcal{X}, \; g_{k+1}\in \partial f(x_{k+1})  \label{eq:1stepOPT_a} \\
  &(\lambda_{k+1}-\lambda)^\top \left(Ax_{k+1}-b-\frac{1}{\eta_k}(\nabla h(\lambda_{k+1}) - \nabla h(\lambda_{k}))\right)\geq 0, \quad \forall \lambda \in \Lambda.\label{eq:1stepOPT_b}
\end{align}
Therefore, we have 
\begin{align*}
    &L(x_{k+1}, \lambda) - L(x, \lambda_{k+1}) \\
    = &f(x_{k+1}) - f(x) + \lambda^\top (Ax_{k+1}-b) + {\lambda_{k+1}}^\top (b-Ax)\\
    = &f(x_{k+1}) - f(x) + (x_{k+1}-x)^\top A^\top \lambda_{k+1} - (\lambda_{k+1}-\lambda)^\top (Ax_{k+1}-b).
  \end{align*}
  Invoking the convexity of $f(x)$ and~\eq{eq:1stepOPT_a}, it follows that
  \begin{align*}
    f(x_{k+1}) - f(x) +(x_{k+1}-x)^\top A^\top \lambda_{k+1}  &\leq (x_{k+1}-x)^\top  g_{k+1} + (x_{k+1}-x)^\top A^\top \lambda_{k+1} \\
    & = (x_{k+1}-x)^\top ( g_{k+1}+ A^\top \lambda_{k+1}) \leq 0.
  \end{align*}
  For the second part, we have
  \begin{align*}
    - (\lambda_{k+1}-\lambda)^\top (Ax_{k+1}-b) &\leq \frac{1}{\eta_k}(\lambda - \lambda_{k+1})^\top (\nabla h(\lambda_{k+1}) - \nabla h(\lambda_{k}))\\
    &= \frac{1}{\eta_k}( D_h(\lambda, \lambda_k) - D_h(\lambda, \lambda_{k+1}) - D_h(\lambda_{k+1},\lambda_{k}))\\
    &\leq \frac{1}{\eta_k}( D_h(\lambda, \lambda_k) - D_h(\lambda, \lambda_{k+1})).
  \end{align*}
  where the first inequality uses \eq{eq:1stepOPT_b}, and the second equality uses the three-point identity of Bregman divergence. Summing up the above two inequalities leads to the desired result.
\end{proof}

Denote the candidate solution
$$\tilde{x}_T = \frac{{\sum}_{k=0}^{T-1}\eta_{k} x_{k+1}}{\sum_{k=0}^{T-1}\eta_k},\; \tilde{\lambda}_T = \frac{\sum_{k=0}^{T-1}\eta_{k} \lambda_{k+1}}{{\sum}_{k=0}^{T-1}\eta_k}.$$
From Lemma \ref{lma:1step}, we can immediately obtain the following result. 
\begin{lemma}\label{lma:PDgap} We have
\begin{align}\label{eq:Lagrange_residual}
    L(\tilde{x}_T, \lambda) - L(x, \tilde{\lambda}_T) \leq \frac{D_h(\lambda, \lambda_0)}{\sum_{k=0}^{T-1}\eta_k}, \quad \forall x\in \mathcal{X}, \lambda \in \Lambda.
\end{align}
Moreover, by setting $x=x^*$, we further have 
 \begin{align}
    f(\tilde{x}_T) - f(x^*) + \lambda^\top (A\tilde{x}_T - b) \leq \frac{D_h(\lambda, \lambda_0)}{\sum_{k=0}^{T-1}\eta_k}, \quad \forall \lambda \in \Lambda. \label{eq:PgapIneq}
\end{align}
\end{lemma}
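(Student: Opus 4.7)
The plan is to derive \eq{eq:Lagrange_residual} by combining Lemma~\ref{lma:1step} with a telescoping argument and Jensen's inequality applied to the Lagrangian. Concretely, I would start from the second inequality in Lemma~\ref{lma:1step}, multiply both sides by $\eta_k$, and then sum over $k=0,1,\dots,T-1$. Since the right-hand side telescopes to $D_h(\lambda,\lambda_0) - D_h(\lambda,\lambda_T)$ and $D_h$ is nonnegative, I obtain
\[
\sum_{k=0}^{T-1}\eta_k\bigl[L(x_{k+1},\lambda)-L(x,\lambda_{k+1})\bigr] \;\leq\; D_h(\lambda,\lambda_0).
\]

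Next, I would invoke the convex/affine structure of $L$. Since $L(\cdot,\lambda) = f(\cdot) + \lambda^\top(A\cdot -b)$ is convex in $x$, Jensen's inequality applied to the weighted average $\tilde{x}_T$ gives $L(\tilde{x}_T,\lambda) \leq \frac{1}{\sum_k \eta_k}\sum_k \eta_k L(x_{k+1},\lambda)$. And since $L(x,\cdot)$ is affine (linear plus constant) in $\lambda$, the equality $L(x,\tilde{\lambda}_T) = \frac{1}{\sum_k \eta_k}\sum_k \eta_k L(x,\lambda_{k+1})$ holds with the weighted average $\tilde{\lambda}_T$. Combining these with the telescoped bound and dividing by $\sum_{k=0}^{T-1}\eta_k$ yields \eq{eq:Lagrange_residual}.

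For \eq{eq:PgapIneq}, I would specialize to $x=x^*$. Writing out $L(\tilde{x}_T,\lambda) - L(x^*,\tilde{\lambda}_T) = f(\tilde{x}_T) - f(x^*) + \lambda^\top(A\tilde{x}_T-b) - \tilde{\lambda}_T^\top(Ax^*-b)$, the final term vanishes in the equality-constrained case (since $Ax^*=b$), and in the inequality-constrained case it is nonpositive because $\tilde{\lambda}_T\in\RR^m_+$ and $Ax^*-b\leq 0$. Either way, $-\tilde{\lambda}_T^\top(Ax^*-b)\geq 0$, so dropping that term only strengthens the left-hand side and gives the claimed bound.

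No step looks genuinely hard; the main thing to be careful about is the direction of the slack term when specializing to $x^*$ in the inequality-constrained case, and making sure the weighted averaging is compatible with both the convexity of $L$ in $x$ and its linearity in $\lambda$ so that one direction is an inequality and the other an equality. The telescoping and the use of $D_h(\lambda,\lambda_T)\geq 0$ are routine once Lemma~\ref{lma:1step} is in hand.
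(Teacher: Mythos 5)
Your proposal is correct and follows essentially the same route as the paper: telescope the second inequality of Lemma~\ref{lma:1step} after weighting by $\eta_k$, drop the nonnegative term $D_h(\lambda,\lambda_T)$, use convexity of $L$ in $x$ and linearity in $\lambda$ to pass to the weighted averages, and then discard the nonpositive term $\tilde{\lambda}_T^\top(Ax^*-b)$ when specializing to $x=x^*$. The only cosmetic difference is the order in which you apply the averaging step versus the one-step bound, which does not change the argument.
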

\begin{proof} To obtain \eq{eq:Lagrange_residual}, we see that
  \begin{align*}
    L(\tilde{x}_T, \lambda) - L(x, \tilde{\lambda}_T)
    \leq &\frac{1}{\sum_{k=0}^{T-1}\eta_k} \sum_{k=0}^{T-1}\eta_k\left( L(x_{k+1}, \lambda) - L(x, \lambda_{k+1})\right)\\
    \leq &\frac{1}{\sum_{k=0}^{T-1}\eta_k} \sum_{k=0}^{T-1}( D_h(\lambda, \lambda_k) - D_h(\lambda, \lambda_{k+1})) \\
    = & \frac{1}{\sum_{k=0}^{T-1}\eta_k}\left(D_h(\lambda, \lambda_0)-D_h(\lambda, \lambda_T)\right)
  \end{align*}
  The first step uses the fact that $L(x,\lambda)$ is convex in $x$ and linear in $\lambda$, and the second step uses Lemma~\ref{lma:1step}. Combining the fact that $D_h(\lambda,\lambda_T)\geq0$, we end up with \eq{eq:Lagrange_residual}. Setting $x=x^*$, the result in \eq{eq:PgapIneq} follows based on the fact that $\tilde{\lambda}_T^\top (Ax^*-b) \leq 0$. 
\end{proof}


The following theorem describes the primal convergence rate of BALM applied to the linearly constrained problems \eq{eq:linear-equality-problem} an \eq{eq:linear-inequality-problem}, both in terms of the optimality and the constraint violation.
\begin{theorem}\label{thm:primalexact}
 Define $\rho_*=2\|\lambda^*\|+1$. \algref{alg:BALM} satisfies that 
\begin{enumerate}
  \item[(a)] For the equality constrained problem \eq{eq:linear-equality-problem}, 
  \begin{align}\label{eq:bound_equality_case}
\max\left\{ |f(\tilde{x}_T) - f(x^*)|, \|A\tilde{x}_{T}-b\|_{\star}\right\}
\leq \frac{\max_{\lambda \in \mathcal{B}_{\rho_*}}D_h(\lambda, \lambda_0)}{\sum_{k=0}^{T-1}\eta_k},
\end{align}
where $\mathcal{B}_{\rho} = \{\lambda\in\RR^m : \|\lambda\|\leq \rho\}$ and $\|\cdot\|_\star$ is the dual norm of $\|\cdot\|$.
  \item[(b)] For the inequality constrained problem \eq{eq:linear-inequality-problem}, 
  \begin{align}\label{eq:bound_inequality_case}
\max\left\{ |f(\tilde{x}_T) - f(x^*)|, \|[A\tilde{x}_{T}-b]_+\|_{\star}\right\}
\leq \frac{\max_{\lambda \in \mathcal{B}^+_{\rho_*}}D_h(\lambda, \lambda_0)}{\sum_{k=0}^{T-1}\eta_k},
\end{align}
where $\mathcal{B}^+_{\rho} = \{\lambda\in\RR^m :\lambda \geq 0, \|\lambda\|\leq \rho\}$ and $[x]_+=\max(x,0)$.
\end{enumerate}
\end{theorem}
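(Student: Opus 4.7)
My proof plan starts from the key inequality already established in Lemma~\ref{lma:PDgap}, namely the Lagrangian-style bound
\[
f(\tilde x_T)-f(x^*)+\lambda^\top(A\tilde x_T-b)\leq \frac{D_h(\lambda,\lambda_0)}{\sum_{k=0}^{T-1}\eta_k}\qquad \forall\lambda\in\Lambda,
\]
together with the KKT-type inequality \eqref{eq:VIreduced} evaluated at $x=\tilde x_T$, which gives the lower bound $f(\tilde x_T)-f(x^*)\geq -\lambda^{*\top}(A\tilde x_T-b)$. The plan is to combine these two estimates with a clever choice of test multiplier $\lambda$ so that the term $\lambda^\top(A\tilde x_T - b)$ in the upper bound exactly exposes the quantity we want to control (either $\|A\tilde x_T-b\|_\star$ or $\|[A\tilde x_T-b]_+\|_\star$).

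For part (a), set $u:=A\tilde x_T-b$. By the definition of the dual norm I pick a unit vector $v$ with $v^\top u = \|u\|_\star$ and take $\lambda=\rho_* v\in\mathcal{B}_{\rho_*}$. Substituting into the bound yields
\[
f(\tilde x_T)-f(x^*)+\rho_*\|u\|_\star\leq \frac{\max_{\lambda\in\mathcal{B}_{\rho_*}}D_h(\lambda,\lambda_0)}{\sum_{k=0}^{T-1}\eta_k}.
\]
Combining with $f(\tilde x_T)-f(x^*)\geq -\|\lambda^*\|\,\|u\|_\star$ (from \eqref{eq:VIreduced} and the Cauchy--Schwarz--type bound $\lambda^{*\top}u\leq\|\lambda^*\|\,\|u\|_\star$) gives $(\rho_*-\|\lambda^*\|)\|u\|_\star\leq \tfrac{\max D_h}{\sum\eta_k}$, and since $\rho_*-\|\lambda^*\|=\|\lambda^*\|+1\geq 1$, the feasibility bound follows. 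For the objective, the upper bound comes from plugging $\lambda=0\in\mathcal{B}_{\rho_*}$ (allowed since $\rho_*\geq 1$) into the key inequality, and the lower bound is obtained by combining $f(\tilde x_T)-f(x^*)\geq -\|\lambda^*\|\,\|u\|_\star$ with the just-derived feasibility bound, again using $\|\lambda^*\|\leq \rho_*-1$.

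For part (b), the same scheme works, but the construction of the test multiplier is the only delicate step. Writing $u=A\tilde x_T-b$ and $u_+=[u]_+\geq 0$, I need $\lambda\in\mathcal{B}^+_{\rho_*}$ with $\lambda^\top u=\rho_*\|u_+\|_\star$. Using monotonicity of the norm, I first pick a dual unit vector $v\geq 0$ with $v^\top u_+=\|u_+\|_\star$; by zeroing out its coordinates on the set $\{i:u_i<0\}$ (which does not affect $v^\top u_+$ since $u_+$ vanishes there and can only decrease $\|v\|$), I may assume $v$ is supported where $u\geq 0$, so that $v^\top u=v^\top u_+=\|u_+\|_\star$. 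Taking $\lambda=\rho_* v$ gives exactly what is needed. The lower-bound step uses $\lambda^*\geq 0$ so that $\lambda^{*\top}u\leq\lambda^{*\top}u_+\leq\|\lambda^*\|\,\|u_+\|_\star$; the rest of the argument is identical to part (a), with $0\in\mathcal{B}^+_{\rho_*}$ again yielding the objective upper bound.

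The main obstacle I expect is precisely this construction of a nonnegative test multiplier exposing $\|u_+\|_\star$ rather than $\|u\|_\star$; a naive choice $\lambda=\rho_* u_+/\|u_+\|$ works cleanly for the Euclidean norm but needs the monotonicity/supporting-hyperplane argument above for a general norm. Everything else is essentially a matter of packaging the Lagrangian residual estimate and the saddle-point inequality with the right $\rho_*=2\|\lambda^*\|+1$, whose role is to ensure $\rho_*-\|\lambda^*\|\geq 1$, giving the clean constants that appear on the right-hand sides of \eqref{eq:bound_equality_case} and \eqref{eq:bound_inequality_case}.
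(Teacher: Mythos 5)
Your proposal is correct and follows essentially the same route as the paper: combine the Lagrangian residual bound \eqref{eq:PgapIneq} (with $\lambda=0$ for the objective upper bound and with a norm-exposing $\lambda\in\mathcal{B}_{\rho_*}$ or $\mathcal{B}^+_{\rho_*}$ for feasibility) with the saddle-point inequality \eqref{eq:VIreduced} at $x=\tilde{x}_T$, then use $\rho_*-\|\lambda^*\|=\|\lambda^*\|+1\geq1$ to get the stated constants. The only substantive difference is that you make explicit the construction of a nonnegative dual test vector achieving $\lambda^\top(A\tilde{x}_T-b)=\rho_*\|[A\tilde{x}_T-b]_+\|_\star$, which the paper passes over with ``taking maximum over $\mathcal{B}^+_\rho$''; your argument correctly identifies that this step needs monotonicity of the norm, an assumption the paper leaves implicit.
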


\begin{proof} We only focus on the proof for the inequality constrained case. The equality constrained case follows similarly.  Setting $\lambda=0$ in \eq{eq:PgapIneq} implies 
$$f(\tilde{x}_T) - f(x^*) \leq \frac{D_h(0, \lambda_0)}{\sum_{k=0}^{T-1}\eta_k}.$$ Taking maximum over $\lambda\in\mathcal{B}_{\rho}^+$ leads to 
\begin{align}
   f(\tilde{x}_T) - f(x^*) + \rho \|[A\tilde{x}_{T}-b]_+\|_{\star} \leq \frac{\max_{\lambda \in \mathcal{B}^+_{\rho}}D_h(\lambda, \lambda_0)}{\sum_{k=0}^{T-1}\eta_k}, \forall \rho>0. \label{ineq:rho}
\end{align}
Plugging in $x=\tilde{x}_T$ into the equation \eq{eq:VIreduced},  we have
\begin{align}
    f(x^*) -f(\tilde{x}_T) - \|\lambda^*\| \|[A\tilde{x}_{T}-b]_+\|_{\star} \leq f(x^*) -f(\tilde{x}_T) - {\lambda^*}^\top (A\tilde{x}_T - b) \leq 0. \label{ineq:norm}
\end{align}
Now summing together \eq{ineq:rho} and \eq{ineq:norm}, we obtain
\begin{align}
    (\rho - \|\lambda^*\|) \|[A\tilde{x}_{T}-b]_+\|_{\star} \leq \frac{\max_{\lambda \in \mathcal{B}^+_{\rho}}D_h(\lambda, \lambda_0)}{\sum_{k=0}^{T-1}\eta_k}.\label{ineq:rhonorm}
\end{align}
Setting $\rho = 2\|\lambda^*\|+1$ in \eq{ineq:rhonorm} and combining with the fact that $f(\tilde{x}_T) - f(x^*)  \geq -\|\lambda^*\|\cdot \|[A\tilde{x}_{T}-b]_+\|_{\star}$ leads to the desired result in  \eq{eq:bound_inequality_case}. 
\end{proof}

The above result generalizes the existing ergodic convergence result for classical ALM, e.g., in~\cite{xu2017iteration}, which can be viewed as a special case when the Bregman divergence is set to the Euclidean distance.  From the analysis, we see that the convergence of the primal objective and constraint violation heavily depends on the chosen norm used to measure the constraint violation. Note that the rate of primal convergence is essentially in the same order as that of the dual convergence discussed in previous section. When the proximal parameters $\{\eta_k\}_{k\geq0}$ are fixed to a constant, this implies the $\cO(1/T)$ convergence rate of both primal and dual sequences.

\section{A generic acceleration scheme of Bregman Proximal Point method}\label{sec:acc-BPP}
In the seminal work~\cite{guler1992new}, G{\"u}ler proposed the first accelerations of the proximal point algorithm based on Nesterov's acceleration scheme~\cite{nesterov1988approach}. Inexact versions of the accelerated PPA have been later studied in~\cite{he2012accelerated,salzo2012inexact} and  recent work~\cite{lin2018catalyst}. While it seems rather natural to extend the accelerated PPA to the non-Euclidean setting, there exists only few attempts in this direction~\cite{hamdi2011convergence,hamdi2012convergence}. It came to our attention that these existing works contain fatal flaws, both algorithmically and theoretically.

Motivated by \cite{auslender2006interior,guler1992new}, we propose the first theoretically-sound acceleration scheme for Bregman proximal point method, which will later applied to accelerate BALM. Without loss of generality, we consider solving the convex problem 
\begin{equation}\label{eq:objective_PPA}
\max_{\lambda\in\Lambda}\; d(\lambda), 
\end{equation}
where $d(\lambda)$ and $\Lambda$ are closed and convex. The objective $d(\lambda)$ does not have to be the Lagrange dual of the linearly constrained convex program. Let $D_h(\lambda,\lambda'):\Lambda\times\Lambda\to\infty$ be a Bregman divergence induced by some function $h$ that is continuously differentiable and strictly convex on $\Lambda$. In addition, we assume that the Bregman divergence satisfies the so-called~\emph{triangle scaling property}, which turns out to be a crucial assumption to achieve faster rates. The triangle scaling property was introduced recently in~\cite{gutman2018unified,hanzely2018accelerated} for analyzing the convergence of Bregman proximal gradient methods for relatively smooth objective functions. To be specific,

\begin{assumption}\label{ass:TSP} There exists some constant $G>0$ such that the Bregman divergence $D_h$ has the triangle scaling property: for all $\lambda,\lambda_1,\lambda_2\in \text{int}(\Lambda)$,
\begin{align}
    D_h((1-\theta)\lambda + \theta \lambda_1, (1-\theta)\lambda + \theta \lambda_2) \leq G \theta^2 D_h(\lambda_1,\lambda_2),\forall \theta\in[0,1]. \label{eq:trianglescaling}
\end{align}
\end{assumption}

For detailed discussions about this property, see~\cite{hanzely2018accelerated}. For ease of exposition, here we simply adopt $G$ as a uniform constant, which is closely related to the Hessian of the Bregman function. In particular, if the Bregman divergence is both $L_h$-Lipschitz smooth and $\sigma_h$-strongly convex, then $\frac{\sigma_h}{2}\|x-y\|^2\leq D_h(x,y) \leq \frac{L_h}{2}\|x-y\|^2$, thus Assumption~\ref{ass:TSP} is satisfied with $G = L_h/\sigma_h$. 

The general idea for constructing the acceleration scheme is to first define the following sequence of functions recursively:
\begin{align}
\left\{ \begin{array} { l }
  {\phi_0(\lambda) = d(\lambda_0) - AD_h(\lambda, \lambda_0)} \\
  {\phi_{k+1}(\lambda) = (1-\theta_k)\phi_k(\lambda) + \theta_k(d(Jy_k) + \frac{1}{\eta_k}(\nabla h(Jy_k) - \nabla h(y_k))^\top (\lambda - Jy_k))},\\
     \end{array} \right. \label{eq:defphi}
\end{align}
where $y_k$ is any point (to be specified later) and $Jy_k := \arg\max_{\lambda\in \Lambda}\left\{d(\lambda) - \frac{D_h(\lambda, y_k)}{\eta_k}\right\}$. These functions satisfy the following relation,
\begin{lemma} For any $k$ and $\lambda\in\Lambda$, it holds that 
\begin{align}
    d(\lambda) - \phi_{k+1}(\lambda) \leq (1-\theta_k)(d(\lambda) - \phi_k(\lambda)).
\end{align}
\end{lemma}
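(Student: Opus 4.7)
The plan is to reduce the lemma to a single pointwise inequality, namely that the affine ``model function'' added at step $k$ majorizes $d$ on $\Lambda$. Writing the recursion~\eqref{eq:defphi} as $\phi_{k+1}(\lambda) = (1-\theta_k)\phi_k(\lambda) + \theta_k\, m_k(\lambda)$ with
\[
m_k(\lambda) := d(Jy_k) + \tfrac{1}{\eta_k}\bigl(\nabla h(Jy_k) - \nabla h(y_k)\bigr)^\top (\lambda - Jy_k),
\]
subtracting $d(\lambda)$ from both sides and using $d(\lambda) = (1-\theta_k)d(\lambda)+\theta_k d(\lambda)$ gives the identity
\[
d(\lambda) - \phi_{k+1}(\lambda) = (1-\theta_k)\bigl(d(\lambda) - \phi_k(\lambda)\bigr) + \theta_k\bigl(d(\lambda) - m_k(\lambda)\bigr).
\]
Since $\theta_k\in[0,1]$, the claimed inequality is therefore equivalent to $d(\lambda)\leq m_k(\lambda)$ for every $\lambda\in\Lambda$, and the whole proof reduces to producing this single affine upper bound on $d$.

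To establish $d\leq m_k$ on $\Lambda$, I would combine the first-order optimality of $Jy_k$ with the concavity of $d$. Because $Jy_k = \arg\max_{\lambda\in\Lambda}\{d(\lambda) - \tfrac{1}{\eta_k}D_h(\lambda,y_k)\}$ maximizes a concave function over a convex set, there exists a supergradient $g_k\in\partial d(Jy_k)$ satisfying the variational inequality
\[
\Bigl(g_k - \tfrac{1}{\eta_k}\bigl(\nabla h(Jy_k)-\nabla h(y_k)\bigr)\Bigr)^\top (\lambda - Jy_k)\leq 0,\qquad \forall\lambda\in\Lambda.
\]
Combined with the supergradient inequality $d(\lambda)\leq d(Jy_k) + g_k^\top(\lambda-Jy_k)$, this yields
\[
d(\lambda) \leq d(Jy_k) + \tfrac{1}{\eta_k}\bigl(\nabla h(Jy_k)-\nabla h(y_k)\bigr)^\top(\lambda-Jy_k) = m_k(\lambda),
\]
which is exactly what the reduction above requires.

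The only delicate point is the direction of the optimality condition: because $d$ is concave and the Bregman regularizer $D_h(\cdot,y_k)$ is convex, $Jy_k$ maximizes a concave function, so the associated variational inequality is stated in terms of a supergradient of $d$, not a gradient; using the wrong sign would flip the final inequality. Apart from this, the argument is essentially the Bregman analogue of Nesterov's estimate-sequence identity, and I expect no further obstacle. Notably, this lemma does not invoke the triangle-scaling property~\eqref{eq:trianglescaling}, nor any specific structure of $d$ beyond concavity; the triangle-scaling property will enter only at the telescoping stage when one chooses $y_k$ adaptively to derive the accelerated rate.
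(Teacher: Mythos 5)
Your proposal is correct and follows essentially the same route as the paper: decompose $d(\lambda)-\phi_{k+1}(\lambda)$ via the recursion and reduce the claim to the upper bound $d(\lambda)\leq d(Jy_k)+\tfrac{1}{\eta_k}(\nabla h(Jy_k)-\nabla h(y_k))^\top(\lambda-Jy_k)$, which the paper also obtains from concavity together with the optimality condition defining $Jy_k$. Your write-up merely spells out the supergradient/variational-inequality argument that the paper states in one line, and your remark that the triangle-scaling property is not needed here is accurate.
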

\begin{proof}
By concavity and optimality condition from the definition of $Jy_k$, we have
\begin{align}
    d(\lambda) \leq d(Jy_k) + \frac{1}{\eta_k}(\nabla h(Jy_k) - \nabla h(y_k))^\top (\lambda - Jy_k). \label{eq:BPPoptimality}
\end{align}
Hence, it immediately implies that
\begin{align}
    d(\lambda) - \phi_{k+1}(\lambda) &= (1-\theta_k)(d(\lambda) - \phi_k(\lambda)) + \theta_k(d(\lambda) - d(Jy_k)\\
    &\qquad - \frac{1}{\eta_k}(\nabla h(Jy_k) - \nabla h(y_k))^\top (\lambda - Jy_k)) \nonumber\\
    &\leq (1-\theta_k)(d(\lambda) - \phi_k(\lambda)).
\end{align}
\end{proof}

Our goal is to obtain $\lambda_k$ such that $d(\lambda_k) \geq \max_{\lambda\in \Lambda} \phi_k(\lambda)$. From the construction of $\phi_k(\lambda)$, we can see that 
\begin{equation}\label{eq:form_phi}
\phi_k(\lambda) = l_k(\lambda) - A_k D_h(\lambda, \lambda_0), 
\end{equation}
where $l_k(\lambda)$ is an affine function, and $A_k = \prod_{i=0}^{k-1}(1-\theta_k)A$. Using the three-point identity of the Bregman divergence, it can be easily shown that
\begin{align}
    \phi_k(\lambda) = \phi_k(\lambda') + \nabla \phi_k(\lambda')^\top (\lambda-\lambda') - A_kD_h(\lambda,\lambda'), \forall \lambda,\lambda' \in \Lambda.
\end{align}
This means that if we let $v_k: = \arg\max_{\lambda\in \Lambda} \phi_k(\lambda)$, we have
\begin{align}
    \phi_k(\lambda) \leq \phi_k(v_k) - A_k D_h(\lambda, v_k), \forall \lambda \in \Lambda. \label{eq:stronglyconvexD}
\end{align}
The following lemma shows how to construct the desired $\lambda_{k+1}$, given that we already have $d(\lambda_k) \geq \phi_k(v_k)$.

\begin{lemma}
Suppose we already have $\lambda_k$ such that $\phi_k(v_k) \leq d(\lambda_k)$, then choosing 
$$\frac{G\theta_k^2}{\eta_k} = (1-\theta_k)A_k, y_k = \theta_k v_k + (1-\theta_k)\lambda_k, \lambda_{k+1}=Jy_k$$ would ensure $\phi_k(v_{k+1}) \leq d(\lambda_{k+1})$.
\end{lemma}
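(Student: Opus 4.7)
The plan is to run the Nesterov--G\"uler estimating-sequence argument, replacing the Euclidean cosine rule by the three-point identity~\eqref{eq:threepoint} and discharging the resulting extra term using the triangle-scaling property of Assumption~\ref{ass:TSP}. I read the stated conclusion as $\phi_{k+1}(v_{k+1}) \leq d(\lambda_{k+1})$, the natural inductive propagation of the hypothesis. The first move is to expand $\phi_{k+1}(\lambda)$ via its recursion~\eqref{eq:defphi}, apply the strong-convexity-type bound $\phi_k(\lambda) \leq \phi_k(v_k) - A_k D_h(\lambda, v_k)$ from~\eqref{eq:stronglyconvexD} together with the hypothesis $\phi_k(v_k) \leq d(\lambda_k)$, and then use the supergradient inequality $d(\lambda_k) \leq d(\lambda_{k+1}) + g_k^\top(\lambda_k - \lambda_{k+1})$, where $g_k := \tfrac{1}{\eta_k}(\nabla h(\lambda_{k+1}) - \nabla h(y_k))$ is a supergradient of the concave $d$ at $\lambda_{k+1}$ by~\eqref{eq:BPPoptimality} for $\lambda_{k+1} = Jy_k$. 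Setting $\bar\lambda := (1-\theta_k)\lambda_k + \theta_k \lambda$, the $(1-\theta_k),\theta_k$ split collects cleanly into
\[\phi_{k+1}(\lambda) \leq d(\lambda_{k+1}) + g_k^\top(\bar\lambda - \lambda_{k+1}) - (1-\theta_k)A_k D_h(\lambda, v_k).\]

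The heart of the argument is to bound $g_k^\top(\bar\lambda - \lambda_{k+1})$ by $D_h(\lambda, v_k)$, which is where Assumption~\ref{ass:TSP} enters. Applying the three-point identity~\eqref{eq:threepoint} to the triple $(\bar\lambda, \lambda_{k+1}, y_k)$ gives
\[g_k^\top(\bar\lambda - \lambda_{k+1}) = \tfrac{1}{\eta_k}\bigl[D_h(\bar\lambda, y_k) - D_h(\bar\lambda, \lambda_{k+1}) - D_h(\lambda_{k+1}, y_k)\bigr].\]
The critical structural observation is that the prescribed choice $y_k = \theta_k v_k + (1-\theta_k)\lambda_k$ makes $\bar\lambda$ and $y_k$ two convex combinations anchored at $\lambda_k$ with identical weight $\theta_k$; the triangle-scaling inequality~\eqref{eq:trianglescaling} then yields $D_h(\bar\lambda, y_k) \leq G\theta_k^2 D_h(\lambda, v_k)$. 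Dropping the two nonpositive Bregman terms leaves $g_k^\top(\bar\lambda - \lambda_{k+1}) \leq \tfrac{G\theta_k^2}{\eta_k}D_h(\lambda, v_k)$.

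Substituting back produces
\[\phi_{k+1}(\lambda) \leq d(\lambda_{k+1}) + \Bigl[\tfrac{G\theta_k^2}{\eta_k} - (1-\theta_k)A_k\Bigr]D_h(\lambda, v_k) \qquad \forall\, \lambda \in \Lambda,\]
and the calibration $G\theta_k^2/\eta_k = (1-\theta_k)A_k$ zeroes out the bracket. Taking $\lambda = v_{k+1}$ gives $\phi_{k+1}(v_{k+1}) \leq d(\lambda_{k+1})$, closing the induction. The main obstacle I anticipate is the algebraic alignment needed to invoke TSP tightly: only the specific parametrization $y_k = \theta_k v_k + (1-\theta_k)\lambda_k$ matches the weight $\theta_k$ hidden inside $\bar\lambda$, so any other anchor would leave a cross term that the slack $(1-\theta_k)A_k D_h(\lambda, v_k)$ cannot absorb. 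The prescribed parameter relation $G\theta_k^2/\eta_k = (1-\theta_k)A_k$ is then forced by balancing these two coefficients, which in turn dictates the recursion for $A_k$ and the resulting accelerated rate.
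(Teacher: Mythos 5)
Your proposal is correct and follows essentially the same route as the paper's proof: strong convexity of $\phi_k$ in the Bregman sense, the induction hypothesis, the optimality inequality~\eqref{eq:BPPoptimality} at $\lambda_k$, the three-point identity on the triple $(\bar\lambda,\lambda_{k+1},y_k)$, and triangle scaling with common anchor $\lambda_k$ and weight $\theta_k$ to absorb the cross term. Your reading of the conclusion as $\phi_{k+1}(v_{k+1})\leq d(\lambda_{k+1})$ is also the intended one (the lemma statement contains a typo), and this is exactly what the paper's argument establishes.
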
\label{lma:induction}
\begin{proof} Denote $\Delta_k=\frac{1}{\eta_k}[\nabla h(Jy_k) - \nabla h(y_k)]$. We can show that
\begin{align*}
    \phi_{k+1}(v_{k+1})\nonumber
     &= \max_{\lambda \in \Lambda} \{(1-\theta_k)\phi_k(\lambda)+ \theta_k(d(Jy_k) + \Delta_k^\top (\lambda - Jy_k))\} \nonumber\\
    &\leq \max_{\lambda \in \Lambda} \{(1-\theta_k)(\phi_k(v_k) - A_k D_h(\lambda, v_k))+ \theta_k(d(Jy_k) + \Delta_k^\top (\lambda - Jy_k))\} \nonumber\\
    &\leq \max_{\lambda \in \Lambda} \{(1-\theta_k)(d(\lambda_k) - A_k D_h(\lambda, v_k))+ \theta_k(d(Jy_k) + \Delta_k^\top (\lambda - Jy_k))\} \nonumber\\
    &\leq d(Jy_k) + \max_{\lambda \in \Lambda} \{-(1-\theta_k)A_k D_h(\lambda, v_k)+ \Delta_k^\top (\theta_k\lambda + (1-\theta_k)\lambda_k - Jy_k)\} \nonumber\\
    &\leq d(Jy_k) + \max_{\lambda \in \Lambda} \{-(1-\theta_k)A_k D_h(\lambda, v_k)+ \frac{1}{\eta_k}D_h(\theta_k\lambda + (1-\theta_k)\lambda_k, y_k)\}. \nonumber
\end{align*}
Here the first inequality uses \eq{eq:stronglyconvexD}; the second inequality uses the induction hypothesis; the third inequality applies \eq{eq:BPPoptimality} with $\lambda = \lambda_k$; and the last inequality uses the three-point identity \eq{eq:threepoint}. Next, based on assumption \eq{ass:TSP} and the fact that $y_k = \theta_k v_k + (1-\theta_k)\lambda_k$, we can further obtain:
\begin{align}
    \phi_{k+1}(v_{k+1}) &\leq d(Jy_k) + \max_{\lambda \in \Lambda} \Big\{-(1-\theta_k)A_k D_h(\lambda, v_k)\\
    &\qquad\qquad + \frac{1}{\eta_k}D_h(\theta_k\lambda + (1-\theta_k)\lambda_k, \theta_k v_k + (1-\theta_k)\lambda_k)\Big\} \nonumber \\
    &\leq d(Jy_k) + \max_{\lambda \in \Lambda} \{-(1-\theta_k)A_k D_h(\lambda, v_k)+ \frac{G \theta_k^2}{\eta_k}D_h(\lambda, v_k)\} \nonumber
\end{align}
If we choose $\lambda_{k+1}=Jy_k$, and $\frac{G\theta_k^2}{\eta_k} = (1-\theta_k)A_k$, by induction, we have $d(\lambda_{k+1}) \geq \phi_{k+1}(v_{k+1})$.
\end{proof}

Now we are in the position to present the generic accelerated scheme of  Bregman proximal point algorithm (acc-BPP) in \algref{Accelerated BPP1}. The computation of $v_k$ can be carried out in a closed form in most cases, since $\phi_k(\lambda)$ composes an affine term and a Bregman divergence term as expressed in~\eq{eq:form_phi}.
\begin{algorithm}[ht!]
\KwIn{$\lambda_0 \in \Lambda, v_0=\lambda_0, A_0=A \in (0, +\infty), \phi_0(\lbd) = d(\lambda_0) - AD_h(\lambda,\lambda_0), \{\eta_k\}_{k\geq0}, G$}
\nl \For{$k\geq 0$}{
\nl Choose $\theta_k$ such that $\frac{ \eta_k A_k(1-\theta_k)}{G} = \theta_k^2$, i.e. $\theta_k = \frac{\sqrt{(A_k\eta_k/G)^2+4A_k\eta_k/G} - A_k\eta_k/G}{2}$\;
\nl $y_k = \theta_k v_k + (1-\theta_k)\lambda_k$\;
\nl $\lambda_{k+1} \in \arg\max_{\lbd\in \Lambda}\{d(\lbd) - \frac{1}{\eta_k}D_h(\lbd, y_k)\}$\;
\nl $A_{k+1} = (1-\theta_k)A_k$\;
\nl $\phi_{k+1}(\lbd) = (1-\theta_k)\phi_k(\lbd) + \theta_k(d(\lambda_{k+1}) + \frac{1}{\eta_k}(\nabla h(\lambda_{k+1}) - \nabla h(y_k))^\top (\lambda-\lambda_{k+1}))$\;
\nl $v_{k+1} = \arg\max_{\lambda\in \Lambda}\phi_{k+1}(\lambda)$
}
    \caption{{\bf Accelerated Bregman Proximal Point Algorithm (acc-BPP)} \label{Accelerated BPP1}}
\end{algorithm}

The following theorem characterizes the convergence rate of \algref{Accelerated BPP1}.

\begin{theorem}
Under Assumption~\ref{ass:TSP}, \algref{Accelerated BPP1} satisfies that
\begin{align}
d(\lambda^*) - d(\lambda_T) \leq \left [\prod_{k=0}^{T-1}(1-\theta_i)\right ] \left(d(\lambda^*) - d(\lambda_0) + AD_h(\lambda^*, \lambda_0)\right), \label{eq:ABPPrate}
\end{align}
and
\begin{align}
    \frac{1}{\left(1+\sqrt{A/G}\sum_{k=0}^{T-1}\sqrt{\eta_k}\right)^2}\leq \prod_{k=0}^{T-1}(1-\theta_i) \leq \frac{1}{\left(1+(\sqrt{A/G}/2)\sum_{k=0}^{T-1}\sqrt{\eta_k}\right)^2}. \label{eq:ABPPrateseq}
\end{align}
\end{theorem}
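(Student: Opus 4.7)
The plan is to split the proof into two pieces: the first inequality \eqref{eq:ABPPrate} is a direct consequence of the two lemmas already proven, and the second inequality \eqref{eq:ABPPrateseq} reduces to analyzing the recursion $A_{k+1}=(1-\theta_k)A_k$ via the substitution $1/\sqrt{A_k}$.

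For the first inequality, I would chain the two lemmas. By iterating the first lemma $d(\lambda)-\phi_{k+1}(\lambda)\leq (1-\theta_k)(d(\lambda)-\phi_k(\lambda))$ from $k=0$ to $T-1$ and evaluating at $\lambda=\lambda^*$, I get $d(\lambda^*)-\phi_T(\lambda^*)\leq \prod_{k=0}^{T-1}(1-\theta_k)\,(d(\lambda^*)-\phi_0(\lambda^*))$. Plugging in $\phi_0(\lambda^*)=d(\lambda_0)-AD_h(\lambda^*,\lambda_0)$ handles the right-hand side. For the left-hand side, the induction in Lemma~\ref{lma:induction} gives $\phi_T(v_T)\leq d(\lambda_T)$, and since $v_T$ maximizes $\phi_T$ I have $\phi_T(\lambda^*)\leq \phi_T(v_T)\leq d(\lambda_T)$, which yields $d(\lambda^*)-d(\lambda_T)\leq d(\lambda^*)-\phi_T(\lambda^*)$. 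Combining these two bounds proves \eqref{eq:ABPPrate}.

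For \eqref{eq:ABPPrateseq}, observe that the recursion in Step 5 of Algorithm~\ref{Accelerated BPP1} yields $\prod_{k=0}^{T-1}(1-\theta_k)=A_T/A$, so it suffices to estimate $A_T$ from above and below. Using the defining identity $\theta_k^2=\eta_k A_{k+1}/G$ (equivalent to the condition in Step 2), I compute
\begin{align*}
\frac{1}{\sqrt{A_{k+1}}}-\frac{1}{\sqrt{A_k}}
= \frac{A_k-A_{k+1}}{\sqrt{A_kA_{k+1}}(\sqrt{A_k}+\sqrt{A_{k+1}})}
= \frac{A_k\theta_k}{\sqrt{A_kA_{k+1}}(\sqrt{A_k}+\sqrt{A_{k+1}})}
= \frac{\sqrt{\eta_k/G}}{1+\sqrt{A_{k+1}/A_k}}.
\end{align*}
Since $0\leq A_{k+1}\leq A_k$, the ratio $\sqrt{A_{k+1}/A_k}\in[0,1]$ traps this quantity between $\tfrac{1}{2}\sqrt{\eta_k/G}$ and $\sqrt{\eta_k/G}$. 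Telescoping from $k=0$ to $T-1$ with $A_0=A$ gives
\begin{align*}
\tfrac{1}{2\sqrt{G}}\sum_{k=0}^{T-1}\sqrt{\eta_k}\;\leq\; \tfrac{1}{\sqrt{A_T}}-\tfrac{1}{\sqrt{A}}\;\leq\; \tfrac{1}{\sqrt{G}}\sum_{k=0}^{T-1}\sqrt{\eta_k},
\end{align*}
and after rearranging and squaring, $A_T/A$ lies in the desired interval.

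I expect the main subtlety to be the bookkeeping in the key identity $1/\sqrt{A_{k+1}}-1/\sqrt{A_k}=\sqrt{\eta_k/G}/(1+\sqrt{A_{k+1}/A_k})$: one must be careful that the parameter choice in Step 2 indeed enforces $\theta_k^2=\eta_k A_{k+1}/G$ (not $\eta_k A_k/G$), which is what makes the algebraic cancellation clean. Once that identity is in hand, the rest is elementary, and no further assumptions beyond Assumption~\ref{ass:TSP} and the well-definedness of the updates are needed.
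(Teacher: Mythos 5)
Your argument for \eqref{eq:ABPPrate} is exactly the paper's: iterate the decrease lemma $d(\lambda)-\phi_{k+1}(\lambda)\leq(1-\theta_k)(d(\lambda)-\phi_k(\lambda))$ at $\lambda=\lambda^*$ and combine it with the invariant $d(\lambda_T)\geq\max_{\lambda\in\Lambda}\phi_T(\lambda)$ from Lemma~\ref{lma:induction}. For \eqref{eq:ABPPrateseq} the paper simply cites G\"uler (1992), whereas you give a correct self-contained derivation: the identity $\theta_k^2=\eta_kA_{k+1}/G$ does follow from Step 2 together with $A_{k+1}=(1-\theta_k)A_k$, the quantity $1/\sqrt{A_{k+1}}-1/\sqrt{A_k}=\sqrt{\eta_k/G}\,/\,(1+\sqrt{A_{k+1}/A_k})$ is indeed trapped in $[\tfrac12\sqrt{\eta_k/G},\sqrt{\eta_k/G}]$ since $0<A_{k+1}\leq A_k$ (note $\theta_k\in(0,1)$ strictly, so all square roots are well defined), and telescoping with $\prod_{k=0}^{T-1}(1-\theta_k)=A_T/A$ gives both bounds; this is essentially the argument in the cited reference.
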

\begin{proof}
By the construction of $\{\phi_k(\lbd)\}_k$, we have
\[d(\lambda) - \phi_{k}(\lambda) \leq \prod_{i=0}^{k-1}(1-\theta_i) (d(\lambda) - \phi_0(\lambda)),\quad \forall \lambda\in \Lambda.\]
And the inductive construction of $\lambda_k$ guarantees that $d(\lambda_k) \geq \max_{\lambda \in \Lambda} \phi_k(\lambda)$, which proves \eq{eq:ABPPrate}. The inequality \eq{eq:ABPPrateseq} is proved in \cite{guler1992new}.
\end{proof}

The above theorem indicates that acc-BPP improves the convergence rate of BPP from $\cO\left({1}/{\sum_{j=0}^{T-1}\eta_j}\right)$ to $\cO\big({1}/{(\sum_{j=0}^{T-1}\sqrt{\eta_j})^2}\big)$. This recovers the result in~\cite{guler1992new} as a special case.  In particular, if we choose $\eta_k = \eta, k=0,1,\cdots,T-1$ to be a constant, this  automatically leads to the $\cO(1/T^2)$ convergence rate. Note however, $\eta_k$ can be arbitrarily chosen in practice. 

Note that the triangle scaling property of Bregman divergences naturally arises when designing the generic acceleration scheme. While this condition may not be satisfied by some Bregman divergences, it should be noticed from the proof that the condition only needs to hold true at $\theta_k, k=0,1,\cdots$. When $\theta_k$'s are bounded away from $0$, which is almost always the case in practice, there exists a constant $G$ such that the property is satisfied, albeit possibly being large and difficult to estimate. In the numerical experiments, we find that setting $G$ to be any positive constant provides accelerated performance.

\paragraph{Remark} Recall that existing accelerated Bregman proximal gradient methods (ABPG)~\cite{hanzely2018accelerated,gutman2018unified} attain the $\cO(1/T^2)$ rate of convergence when solving the composite optimization, i.e., $\max_{\lambda\in\Lambda} g(\lambda)+d(\lambda)$, where $g(\lambda)$ is (relatively) Lipschitz smooth and $d(\lambda)$ is a simple concave function admitting easy-to-compute Bregman operators.  One may be tempted to think that ABPG reveals an ``accelerated" version of Bregman proximal point method when setting $g(\lambda)=0$. Take the Algorithm 1 in \cite{hanzely2018accelerated} for an example. Setting $g(\lambda)=0$ leads to the following ``accelerated" algorithm
\begin{align}
\left\{ \begin{array} { l }
  {y_k = (1-\theta_k)\mu_k + \theta_k \lbd_k} \\
  {\lbd_{k+1} = \arg\max_{\lbd\in \Lambda}\{d(\lbd) - \theta_k LD_h(\lbd,\lbd_k)\}}\\
  {\mu_{k+1} = (1-\theta_k)\mu_k + \theta_k \lbd_{k+1}}\\
  {\frac{1-\theta_{k+1}}{\theta_{k+1}^2} = \frac{1}{\theta_k^2}}
     \end{array} \right.
\end{align}
The choice of $L$ is arbitrary, but it is fixed once chosen. Based on the convergence analysis in \cite{gutman2018unified}, the above algorithm inherits the convergence rate of $d(\lambda^*) - d(\mu_T) \leq \frac{4LD_h(\lambda^*,\; \mu_0)}{T^2}$. However, it is also shown in~\cite{gutman2018unified} that $\theta_k \leq \frac{2}{k+1}$, or equivalently, the proximal parameters $\eta_k \geq \frac{k+1}{2L}$ in the proximal point scheme. Notably,  if we choose $\eta_k \geq \frac{k+1}{2L}$ in the vanilla BPP,  the achievable convergence rate  would be $\cO\left(\frac{1}{\sum_{k=0}^{T-1}\eta_k}\right) = \cO(1/T^2)$, which already attains the same rate as the above ``accelerated" algorithm. In contrast, the proposed acc-BPP would achieve the rate $\cO\left(\frac{1}{(\sum_{k=0}^{T-1}\sqrt{\eta_k})^2}\right) = \cO(1/T^3)$ with such proximal parameters, which is much faster than $\cO(1/T^2)$ rate. Therefore, we emphasize that the freedom in choosing arbitrary $\{\eta_k\}_{k\geq 0}$ is crucial here, and also distinct our acceleration with the one above. 




\section{Two variations of the accelerated Bregman Proximal Point method}\label{sec:acc-BPP-variants}
In this section, we introduce two variations (or special cases) of acc-BPP that enjoy much more compact forms as well as simpler convergence analysis. 

\subsection{Memoryless form}
In the previous generic acceleration scheme, $v_k$ is defined as $v_k := \arg\max_{\lambda\in \Lambda} \phi_k(\lambda)$. This requires keeping track of the explicit form of functions $\{\phi_k\}_{k\geq 0}$ and computing its maximizer, which may not necessarily admit a closed form. This issue can be alleviated by setting $v_k := \arg\max_{\lambda} \phi_k(\lambda)$, instead. In fact, it can be easily seen that the proof still remains valid by doing so, if additional relaxing Assumption~\ref{ass:TSP} to hold on the entire domain of $h$ instead of $\Lambda$. In this case, we can obtain a closed-form for $v_k$. 
\begin{lemma}[\cite{auslender2006interior}]
Let $\phi_k(\lambda)$ be recursively defined as \eq{eq:defphi} with $Jy_k = \lambda_{k+1}$, then $v_k = \arg\max_{\lambda} \phi_k(\lambda)$ satisfies the following recursion relation
\begin{align}
    v_{k+1} &= \arg\max_{\lambda}\left\{\frac{\theta_k}{\eta_k}(\nabla h(\lambda_{k+1}) - \nabla h(y_k))^\top \lambda - A_{k+1}D_h(\lambda, v_k) \right\},\nonumber \\
    &= \arg\max_{\lambda}\left\{\frac{1}{G\theta_k}(\nabla h(\lambda_{k+1}) - \nabla h(y_k))^\top \lambda - D_h(\lambda, v_k) \right\}.
\end{align}
\end{lemma}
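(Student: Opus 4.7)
The statement is a bookkeeping calculation that rewrites the maximizer of $\phi_{k+1}$ as a one-step Bregman proximal update centred at $v_k$. The central observation, already recorded right before the lemma in the excerpt, is that for any $\lambda, \lambda'\in\Lambda$ one has
\[
\phi_k(\lambda) = \phi_k(\lambda') + \nabla\phi_k(\lambda')^\top(\lambda-\lambda') - A_k D_h(\lambda,\lambda'),
\]
so specializing to $\lambda' = v_k$ and using first-order optimality $\nabla\phi_k(v_k)=0$ (valid here because we are working in the memoryless form, where $v_k$ is an unconstrained maximizer), the function $\phi_k$ collapses to the convenient representation
\[
\phi_k(\lambda) = \phi_k(v_k) - A_k D_h(\lambda, v_k).
\]
This is the first step I would carry out, since it replaces the running record of the affine term $l_k$ by a single Bregman term anchored at $v_k$.

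Next I would plug this representation into the recursion defining $\phi_{k+1}$ from \eqref{eq:defphi} with $Jy_k = \lambda_{k+1}$:
\[
\phi_{k+1}(\lambda) = (1-\theta_k)\bigl[\phi_k(v_k) - A_k D_h(\lambda, v_k)\bigr] + \theta_k\Bigl(d(\lambda_{k+1}) + \tfrac{1}{\eta_k}(\nabla h(\lambda_{k+1})-\nabla h(y_k))^\top(\lambda - \lambda_{k+1})\Bigr).
\]
Grouping terms according to whether they depend on $\lambda$, everything involving $\phi_k(v_k)$, $d(\lambda_{k+1})$, and $\lambda_{k+1}$ is constant in $\lambda$ and therefore irrelevant to the $\arg\max$. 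Using the algorithmic identity $A_{k+1} = (1-\theta_k)A_k$, the $\lambda$-dependent part reduces to
\[
\tfrac{\theta_k}{\eta_k}(\nabla h(\lambda_{k+1})-\nabla h(y_k))^\top \lambda - A_{k+1}\, D_h(\lambda,v_k),
\]
which yields the first displayed form of $v_{k+1}$.

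For the second form I would use the defining choice of $\theta_k$ in \algref{Accelerated BPP1}, namely $\tfrac{G\theta_k^2}{\eta_k} = (1-\theta_k)A_k = A_{k+1}$. Dividing the objective from the previous step through by the positive constant $A_{k+1}$, the coefficient $\tfrac{\theta_k}{\eta_k A_{k+1}}$ becomes
\[
\frac{\theta_k}{\eta_k}\cdot\frac{\eta_k}{G\theta_k^2} = \frac{1}{G\theta_k},
\]
while the Bregman term becomes $-D_h(\lambda,v_k)$, matching the second expression in the lemma.

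The only subtlety (and the step I would double-check) is the appeal to $\nabla\phi_k(v_k)=0$. Because the lemma is stated in the memoryless setting, $v_k$ is chosen over the entire domain of $h$ rather than $\Lambda$, which ensures the first-order condition really does eliminate the linear term. If one instead took $v_k\in\arg\max_{\lambda\in\Lambda}\phi_k(\lambda)$ with $v_k$ on the boundary of $\Lambda$, only a variational inequality would hold and the clean identity $\phi_k(\lambda)=\phi_k(v_k)-A_k D_h(\lambda,v_k)$ would fail; this is precisely why the lemma is paired with the stronger assumption that Assumption~\ref{ass:TSP} holds on the full domain of $h$. Apart from checking this point, the remainder is routine algebra.
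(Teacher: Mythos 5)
Your proof is correct. The paper itself gives no proof of this lemma (it only cites Auslender--Teboulle), but your argument is exactly the one suggested by the surrounding text: the quadratic representation $\phi_k(\lambda)=\phi_k(v_k)-A_kD_h(\lambda,v_k)$ from \eq{eq:form_phi} together with $A_{k+1}=(1-\theta_k)A_k=G\theta_k^2/\eta_k$, and your caveat about needing the unconstrained maximizer (so that $\nabla\phi_k(v_k)=0$ rather than a variational inequality) is precisely the point the paper flags when introducing the memoryless form.
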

This implies an explicit $v$-update:
$v_{k+1} = \nabla h^* \big(\nabla h(v_k) + \frac{1}{G\theta_k} (\nabla h(\lambda_{k+1}) - \nabla h(y_k))\big).$

\begin{algorithm}[ht!]
\KwIn{$\lambda_0 \in \Lambda, v_0=\lambda_0, \theta_0 = 1 , \{\eta_k\}_{k\geq0}, G$}
\nl \For{$k\geq 0$}{
\nl $y_k = \theta_k v_k + (1-\theta_k)\lambda_k$\;
\nl $\lambda_{k+1} \in \arg\max_{\lbd\in \Lambda}\{d(\lbd) - \frac{1}{\eta_k}D_h(\lbd, y_k)\}$\;
\nl $v_{k+1} = \nabla h^* \left(\nabla h(v_k) + \frac{1}{G\theta_k} (\nabla h(\lambda_{k+1}) - \nabla h(y_k))\right)$\;
\nl Update $\theta_{k+1}$ such that $\frac{\eta_k}{\theta_{k}^2} = \frac{\eta_{k+1}}{\theta_{k+1}^2} - \frac{\eta_{k+1}}{\theta_{k+1}}$.
}
    \caption{{\bf  acc-BPP2} \label{Accelerated BPP2}}
\end{algorithm}

As a result, acc-BPP simply reduces to \algref{Accelerated BPP2}, which no longer requires to store the representation of $\{\phi_{k}\}$ an has much cheaper memory cost.  The proof for the convergence rate of acc-BPP2 follows exactly that of acc-BPP. Note that choosing $\theta_0 = 1$\footnote{Notice that $\theta_0$ can also be any real number in $(0,1]$, accordingly $A = \frac{G\theta_0^2}{\eta_0(1-\theta_0)}$.} amounts to setting $A=+\infty$ in acc-BPP, thus acc-BPP2 satisfies
\begin{align}
    d(\lambda^*) - d(\lambda_T) \leq \lim_{A\to +\infty}\frac{d(\lambda^*) - d(\lambda_0) + AD_h(\lambda^*, \lambda_0)}{(1+(\sqrt{A/G}/2)\sum_{j=0}^{T-1}\sqrt{\eta_j})^2} = \frac{4GD_h(\lambda^*, \lambda_0)}{\left(\sum_{k=0}^{T-1}\sqrt{\eta_k}\right)^2}
\end{align}

\subsection{Dual averaging form} Below, we show that the above special case of acc-BPP admits another form that resembles the Nesterov's accelerated dual average method \cite{nesterov2005smooth, devolder2014first, tseng2008accelerated}. Recall the definition of $\{\phi_k\}_{k\geq 0}$ and $\{A_k\}_{k\geq 0}$,  the computation of $v_{k+1}$ is also equivalent to 
\begin{align}
    v_{k+1} &= \arg\max_{\lambda\in\Lambda}\Big\{{\sum}_{j=0}^k \frac{\theta_j}{A_{j+1}\eta_j}\left(\nabla h(\lambda_{j+1}) - \nabla h(y_j)\right)^\top \lambda - D_h(\lambda, \lambda_0) \Big\},\nonumber \\ 
    &= \arg\max_{\lambda\in\Lambda}\Big\{{\sum}_{j=0}^k \frac{1}{G\theta_j}\left(\nabla h(\lambda_{j+1}) - \nabla h(y_j)\right)^\top \lambda - D_h(\lambda, \lambda_0) \Big\}.
\end{align}
Hence, acc-BPP can also be rewritten as \algref{Accelerated BPP3}. Based on the dual averaging interpretation, we show that \algref{Accelerated BPP3} admits a simpler convergence proof, which will be further used to prove the primal convergence of accelerated BALM in the next section.

\begin{algorithm}[ht!]
\KwIn{$\lambda_0 \in \Lambda, v_0=\lambda_0, G, \theta_0 =1 , \{\eta_k\}_{k\geq0}$}
\nl \For{$k\geq 0$}{
\nl $y_k = \theta_k v_k + (1-\theta_k)\lambda_k$\;
\nl $\lambda_{k+1} \in \arg\max_{\lbd\in \Lambda}\{d(\lbd) - \frac{1}{\eta_k}D_h(\lbd, y_k)\}$\;
\nl $v_{k+1} = \arg\max_{\lambda\in\Lambda}\left\{- GD_h(\lambda, \lambda_0) + \sum_{j=0}^k \frac{\eta_j}{\theta_j}\left(d(\lambda_{j+1}) + \frac{1}{\eta_j}(\nabla h(\lambda_{j+1}) - \nabla h(y_j))^\top (\lambda-\lambda_{j+1})\right)  \right\}$\;
\nl Update $\theta_{k+1}$ such that $\sum_{j=0}^{k+1}\frac{\eta_j}{\theta_j} = \frac{\eta_{k+1}}{\theta_{k+1}^2}$, satisfied by $\frac{\eta_k}{\theta_{k}^2} = \frac{\eta_{k+1}}{\theta_{k+1}^2} - \frac{\eta_{k+1}}{\theta_{k+1}}$.
}
    \caption{{\bf  acc-BPP3} \label{Accelerated BPP3}}
\end{algorithm}

\begin{theorem}\label{thm:accdual}
Let $S_k = \sum_{j=0}^{k}\frac{\eta_j}{\theta_j}$. Under Assumption~\ref{ass:TSP},  \algref{Accelerated BPP3} satisfies the relation: 
\begin{align}
    S_k d(\lambda_{k+1}) \geq \max_{\lambda\in\Lambda }\tilde{\phi}_k(\lambda),
\end{align}
where $\tilde{\phi}_k(\lambda):=-GD_h(\lambda, \lambda_0)+{\sum}_{j=0}^k \frac{\eta_j}{\theta_j}\big(d(\lambda_{j+1}) + \frac{1}{\eta_j}(\nabla h(\lambda_{j+1}) - \nabla h(y_j))^\top (\lambda-\lambda_{j+1})\big).$
\end{theorem}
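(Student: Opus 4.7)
} The plan is to prove the claim by induction on $k$, showing that $S_k d(\lambda_{k+1}) \geq \tilde\phi_k(v_{k+1})$ where $v_{k+1} := \arg\max_{\lambda \in \Lambda} \tilde\phi_k(\lambda)$. The base case $k=0$ reduces (since $\theta_0=1$ and $y_0 = \lambda_0$) to showing $\max_\lambda\{-GD_h(\lambda,\lambda_0) + (\nabla h(\lambda_1) - \nabla h(\lambda_0))^\top(\lambda - \lambda_1)\} \leq 0$; a direct expansion via the three-point identity \eq{eq:threepoint} (noting that $G \geq 1$ is forced by taking $\theta=1$ in Assumption~\ref{ass:TSP}) gives this immediately.

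For the inductive step, I would decompose $\tilde\phi_{k-1}(\lambda) = -GD_h(\lambda, \lambda_0) + L(\lambda)$ with $L$ affine. Using the three-point identity to re-anchor the Bregman divergence at $v_k$ and invoking the optimality condition of $v_k$ over $\Lambda$ for the resulting linear remainder, one obtains the ``Bregman strong concavity'' bound
\begin{align*}
\tilde\phi_{k-1}(\lambda) \leq \tilde\phi_{k-1}(v_k) - G D_h(\lambda, v_k), \qquad \forall \lambda \in \Lambda.
\end{align*}
Combining this with the inductive hypothesis $\tilde\phi_{k-1}(v_k) \leq S_{k-1} d(\lambda_k)$ and the concavity/optimality inequality \eq{eq:BPPoptimality} applied at $\lambda_k$, namely $d(\lambda_k) \leq d(\lambda_{k+1}) + \frac{1}{\eta_k}\delta_k^\top(\lambda_k - \lambda_{k+1})$ with $\delta_k := \nabla h(\lambda_{k+1}) - \nabla h(y_k)$, I would substitute into the definition of $\tilde\phi_k$ and use $S_{k-1} + \tfrac{\eta_k}{\theta_k} = S_k$ together with the recurrence $S_{k-1} = \tfrac{\eta_k(1-\theta_k)}{\theta_k^2}$ to collect terms. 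After this bookkeeping, the remaining task becomes showing
\begin{align*}
\delta_k^\top\bigl[(1-\theta_k)\lambda_k + \theta_k\lambda - \lambda_{k+1}\bigr] \leq G\theta_k^2\, D_h(\lambda, v_k) \qquad \forall \lambda \in \Lambda.
\end{align*}

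The crux of the argument is this last inequality. The main obstacle, and also the key insight, is to observe that $y_k = (1-\theta_k)\lambda_k + \theta_k v_k$ and the vector in brackets above is $z - \lambda_{k+1}$ with $z := (1-\theta_k)\lambda_k + \theta_k \lambda$; this pairs $z$ and $y_k$ in exactly the form demanded by the triangle scaling property. Applying the three-point identity with the triple $(z, \lambda_{k+1}, y_k)$ gives $\delta_k^\top(z - \lambda_{k+1}) = D_h(z, y_k) - D_h(z, \lambda_{k+1}) - D_h(\lambda_{k+1}, y_k) \leq D_h(z, y_k)$, and Assumption~\ref{ass:TSP} then bounds $D_h(z, y_k) \leq G\theta_k^2 D_h(\lambda, v_k)$, closing the induction and yielding $\tilde\phi_k(\lambda) \leq S_k d(\lambda_{k+1})$ for every $\lambda \in \Lambda$.
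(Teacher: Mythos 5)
Your proposal is correct and follows essentially the same route as the paper's own proof: induction using the Bregman strong-concavity bound $\tilde\phi_{k-1}(\lambda)\le\tilde\phi_{k-1}(v_k)-GD_h(\lambda,v_k)$, the optimality inequality \eq{eq:BPPoptimality} evaluated at the previous iterate, the recurrence $S_{k-1}=\eta_k(1-\theta_k)/\theta_k^2$ to collect coefficients into the convex combination $(1-\theta_k)\lambda_k+\theta_k\lambda$, and then the three-point identity followed by the triangle-scaling property against $y_k=(1-\theta_k)\lambda_k+\theta_k v_k$. Your explicit remark that the base case uses $G\ge 1$ (forced by $\theta=1$ in Assumption~\ref{ass:TSP}) makes a step that the paper leaves implicit.
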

\begin{proof}
We prove the claim by induction. When $k=0, \forall \lambda\in\Lambda, $ 
\begin{align*}
    \tilde{\phi}_0(\lambda)=&-G D_h(\lambda, \lambda_0) + \frac{\eta_0}{\theta_0}d(\lambda_{1}) + \frac{1}{\theta_0} (\nabla h(\lambda_{1}) - \nabla h(y_0))^\top (\lambda-\lambda_{1})\\
    =& -G D_h(\lambda, \lambda_0) + \frac{\eta_0}{\theta_0}d(\lambda_{1}) + \frac{1}{\theta_0}(D_h(\lambda, y_0) - D_h(\lambda, \lambda_1) - D_h(\lambda_1, y_0))\\
    \leq& -G D_h(\lambda, \lambda_0) + \frac{\eta_0}{\theta_0}d(\lambda_{1}) + \frac{1}{\theta_0} D_h(\lambda, \lambda_0)\\
    \leq& \frac{\eta_0}{\theta_0}d(\lambda_{1}).
\end{align*}
Denote $\Delta_k=\frac{1}{\eta_{k}}[\nabla h(\lambda_{k+1}) - \nabla h(y_k)]$. Suppose now the relation is satisfied for $k$, namely,  $\tilde{\phi}_k(\lambda)\leq S_k d(\lambda_{k+1}), \forall \lambda\in\Lambda$. Then we have, 
\begin{align*}
    \tilde{\phi}_{k+1}(\lambda)
    \overset{\langle 1\rangle}{=}&\tilde{\phi}_{k}(\lambda)
    + \frac{\eta_{k+1}}{\theta_{k+1}}\left(d(\lambda_{k+2}) + \Delta_{k+1}^\top (\lambda-\lambda_{k+2})\right) \nonumber\\
    \overset{\langle 2\rangle}{\leq}&\tilde{\phi}_k(v_{k+1})- GD_h(\lambda, v_{k+1}) + \frac{\eta_{k+1}}{\theta_{k+1}}\left(d(\lambda_{k+2}) + \Delta_{k+1}^\top (\lambda-\lambda_{k+2})\right) \nonumber\\
    \overset{\langle 3\rangle}{\leq} & S_k d(\lambda_{k+1}) + \frac{\eta_{k+1}}{\theta_{k+1}}\left(d(\lambda_{k+2}) + \Delta_{k+1}^\top (\lambda-\lambda_{k+2})\right) - GD_h(\lambda, v_{k+1}) \nonumber\\
    \overset{\langle 4\rangle}{\leq} & S_k\left(d(\lambda_{k+2}) + \Delta_{k+1}^\top (\lambda_{k+1}-\lambda_{k+2})\right)   + \frac{\eta_{k+1}}{\theta_{k+1}}\left(d(\lambda_{k+2}) + \Delta_{k+1}^\top (\lambda-\lambda_{k+2})\right) - GD_h(\lambda, v_{k+1})
\end{align*}
where step $\langle 1\rangle$ is a trivial identity based on the definition of $\tilde{\phi}_{k+1}$, step $\langle 2\rangle$ uses the strong convexity of $\tilde{\phi}_k$, step $\langle 3\rangle$ uses the induction hypothesis, step $\langle 4\rangle$ uses \eq{eq:BPPoptimality}. Now since 
\[\frac{S_k}{\eta_{k+1}(1-\theta_{k+1})} = \frac{\eta_{k+1}(1-\theta_{k+1})/\theta_{k+1}^2 }{\eta_{k+1}(1-\theta_{k+1})} = \frac{1}{\theta_{k+1}^2},\]
we have
\begin{align*}
     &{S_k}\Delta_{k+1}^\top (\lambda_{k+1}-\lambda_{k+2}) + \frac{\eta_{k+1}}{\theta_{k+1}}\Delta_{k+1}^\top (\lambda-\lambda_{k+2})\\
     =& \frac{1}{\theta_{k+1}^2}(\nabla h(\lambda_{k+2}) - \nabla h(y_{k+1}))^\top ((1-\theta_{k+1})\lambda_{k+1} + \theta_{k+1}\lambda - \lambda_{k+2})\\
     \overset{\langle 5\rangle}{\leq}&\frac{1}{\theta_{k+1}^2}D_h((1-\theta_{k+1})\lambda_{k+1} + \theta_{k+1}\lambda, y_{k+1})\\
     \overset{\langle 6\rangle}{\leq}& GD_h(\lambda, v_{k+1}).
\end{align*}
Here step $\langle 5\rangle$ applies the three-point identity \eq{eq:threepoint}, and step $\langle 6\rangle$ uses the triangle-scaling property \eq{eq:trianglescaling}. As a result, it follows that  $\tilde{\phi}_{k+1} (\lambda)\leq S_{k+1} d(\lambda_{k+2}),\forall \lambda\in\Lambda$.
\end{proof}

Recall the optimality condition~\eq{eq:BPPoptimality} for the $\lambda$-update, we have $\forall \lambda \in \Lambda$,
\begin{align*}
    \sum_{j=0}^k \frac{\eta_j}{\theta_j}\left(d(\lambda_{j+1}) + \frac{1}{\eta_j}(\nabla h(\lambda_{j+1}) - \nabla h(y_j))^\top (\lambda-\lambda_{j+1})\right) \geq S_k d(\lambda).
\end{align*}
Combining with the above theorem, this implies that $S_k d(\lambda_{k+1}) \geq -GD_h(\lambda, \lambda_0) + S_k d(\lambda)$. Therefore, we immediately  obtain the convergence result:
\begin{corollary}\label{cor:dualconvergence}
For any $\lambda \in \Lambda$, 
\begin{align}
    d(\lambda) - d(\lambda_{k+1}) \leq \frac{GD_h(\lambda, \lambda_0)}{S_k} = \frac{\theta_k^2GD_h(\lambda, \lambda_0)}{\eta_k}.
\end{align}
\end{corollary}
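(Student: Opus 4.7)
The plan is to combine Theorem~\ref{thm:accdual} with the per-iteration optimality condition~\eq{eq:BPPoptimality} and then simplify using the recursion on $\{\theta_k\}$. The work has essentially been staged already: Theorem~\ref{thm:accdual} gives an upper estimate on $\max_{\lambda\in\Lambda}\tilde{\phi}_k(\lambda)$ via $S_k d(\lambda_{k+1})$, and we just need a matching lower estimate of $\tilde{\phi}_k(\lambda)$ in terms of $d(\lambda)$ and $D_h(\lambda,\lambda_0)$.

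First, I would invoke \eq{eq:BPPoptimality} at each step $j=0,\dots,k$: since $\lambda_{j+1}=\arg\max_{\lambda\in\Lambda}\{d(\lambda)-\tfrac{1}{\eta_j}D_h(\lambda,y_j)\}$, concavity of $d$ together with the first-order condition gives, for every $\lambda\in\Lambda$,
\begin{equation*}
d(\lambda)\;\le\; d(\lambda_{j+1})+\tfrac{1}{\eta_j}\bigl(\nabla h(\lambda_{j+1})-\nabla h(y_j)\bigr)^{\!\top}(\lambda-\lambda_{j+1}).
\end{equation*}
Multiplying by the weight $\eta_j/\theta_j>0$ and summing over $j=0,\dots,k$ yields
\begin{equation*}
S_k\, d(\lambda)\;\le\; \sum_{j=0}^{k}\tfrac{\eta_j}{\theta_j}\Bigl(d(\lambda_{j+1})+\tfrac{1}{\eta_j}(\nabla h(\lambda_{j+1})-\nabla h(y_j))^{\!\top}(\lambda-\lambda_{j+1})\Bigr)\;=\;\tilde\phi_k(\lambda)+G\,D_h(\lambda,\lambda_0),
\end{equation*}
by the very definition of $\tilde\phi_k$.

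Next, I would chain this with Theorem~\ref{thm:accdual}, which asserts $\tilde\phi_k(\lambda)\le \max_{\mu\in\Lambda}\tilde\phi_k(\mu)\le S_k d(\lambda_{k+1})$. Combining the two inequalities gives
\begin{equation*}
S_k\,d(\lambda)-G\,D_h(\lambda,\lambda_0)\;\le\;\tilde\phi_k(\lambda)\;\le\; S_k\,d(\lambda_{k+1}),
\end{equation*}
and dividing by $S_k>0$ produces the bound $d(\lambda)-d(\lambda_{k+1})\le G D_h(\lambda,\lambda_0)/S_k$.

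Finally, to recover the closed form $\theta_k^2 G D_h(\lambda,\lambda_0)/\eta_k$, I would use the recursion on $\theta$ built into Algorithm~\ref{Accelerated BPP3}. Step~5 of that algorithm enforces $\sum_{j=0}^{k+1}\tfrac{\eta_j}{\theta_j}=\tfrac{\eta_{k+1}}{\theta_{k+1}^2}$, i.e., $S_k=\eta_k/\theta_k^2$ for every $k\ge 0$ (with $\theta_0=1$ giving the base case $S_0=\eta_0=\eta_0/\theta_0^2$). Substituting $S_k=\eta_k/\theta_k^2$ into the bound above yields the asserted identity and concludes the proof. There is no real obstacle here; the only point requiring mild care is checking that the $\theta_k$-recursion indeed produces $S_k=\eta_k/\theta_k^2$, which is an elementary induction using the update rule for $\theta_{k+1}$ and the initialization $\theta_0=1$.
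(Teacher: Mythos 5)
Your proof is correct and follows essentially the same route as the paper: sum the weighted optimality conditions \eq{eq:BPPoptimality} to lower-bound $\tilde\phi_k(\lambda)$ by $S_k d(\lambda)-GD_h(\lambda,\lambda_0)$, chain with Theorem~\ref{thm:accdual}, and use $S_k=\eta_k/\theta_k^2$ from the $\theta$-update. Your explicit verification of the identity $S_k=\eta_k/\theta_k^2$ (including the base case $\theta_0=1$) is a welcome bit of extra care the paper leaves implicit.
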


Next, we establish a bound on $\theta_k$.
\begin{proposition}\label{prop:thetabound}
\begin{align}
    \frac{\sqrt{\eta_k}}{\sum_{i=0}^{k}\sqrt{\eta_i}} \leq \theta_k \leq \frac{2\sqrt{\eta_k}}{\sum_{i=0}^{k}\sqrt{\eta_i}} \label{eq:thetabound}
\end{align}
\end{proposition}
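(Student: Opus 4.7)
The plan is to make the substitution $t_k := \sqrt{\eta_k}/\theta_k$ so that the recursion $\eta_k/\theta_k^2 = \eta_{k+1}/\theta_{k+1}^2 - \eta_{k+1}/\theta_{k+1}$ becomes a telescoping relation in $t_k$. Concretely, the recursion can be rewritten as $t_k^2 = t_{k+1}^2 - t_{k+1}\sqrt{\eta_{k+1}}$, i.e.,
\begin{equation*}
t_{k+1}^2 - t_k^2 = t_{k+1}\sqrt{\eta_{k+1}},
\end{equation*}
with the initial condition $t_0 = \sqrt{\eta_0}/\theta_0 = \sqrt{\eta_0}$. Factoring the left side gives
\begin{equation*}
t_{k+1} - t_k = \frac{t_{k+1}\sqrt{\eta_{k+1}}}{t_{k+1} + t_k}.
\end{equation*}

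Next I would observe that the sequence $\{t_k\}$ is strictly increasing (and hence positive): from the identity $t_{k+1}^2 = t_k^2 + t_{k+1}\sqrt{\eta_{k+1}}$ one sees $t_{k+1} > t_k \geq t_0 > 0$ by a quick induction. Consequently the ratio $t_{k+1}/(t_k + t_{k+1})$ lies in the interval $[1/2, 1)$, which immediately yields the two-sided bound on the increment:
\begin{equation*}
\tfrac{1}{2}\sqrt{\eta_{k+1}} \;\leq\; t_{k+1} - t_k \;\leq\; \sqrt{\eta_{k+1}}.
\end{equation*}

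Telescoping these inequalities from $j = 0$ to $k-1$ and combining with $t_0 = \sqrt{\eta_0}$ gives
\begin{equation*}
\tfrac{1}{2}{\sum}_{j=0}^{k}\sqrt{\eta_j} \;\leq\; t_k \;\leq\; {\sum}_{j=0}^{k}\sqrt{\eta_j},
\end{equation*}
where the lower bound uses $\sqrt{\eta_0} \geq \tfrac{1}{2}\sqrt{\eta_0}$ to absorb the initial term. Finally, inverting $t_k = \sqrt{\eta_k}/\theta_k$ converts these bounds on $t_k$ into the claimed bounds
\begin{equation*}
\frac{\sqrt{\eta_k}}{\sum_{j=0}^{k}\sqrt{\eta_j}} \;\leq\; \theta_k \;\leq\; \frac{2\sqrt{\eta_k}}{\sum_{j=0}^{k}\sqrt{\eta_j}}.
\end{equation*}

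I expect no real obstacle here: the only non-cosmetic step is spotting the change of variable $t_k = \sqrt{\eta_k}/\theta_k$, which converts a nonlinear recursion into a telescoping one, and the rest is a short induction (monotonicity of $t_k$) plus a direct sum. This mirrors the standard analysis of Nesterov's $\theta_k$-sequence in the Euclidean setting, extended to allow arbitrary positive $\{\eta_k\}$.
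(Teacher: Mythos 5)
Your proof is correct and is essentially the paper's argument: the paper sets $t_i = 1/\theta_i$ and telescopes the quantity $\sqrt{\eta_i}\,t_i$, which is exactly your $t_i = \sqrt{\eta_i}/\theta_i$; the only difference is that you obtain the increment bounds $\tfrac{1}{2}\sqrt{\eta_{i+1}} \leq t_{i+1}-t_i \leq \sqrt{\eta_{i+1}}$ by factoring $t_{i+1}^2 - t_i^2$, whereas the paper solves the quadratic explicitly and bounds the square root. Both routes are valid and of comparable length (just make sure the positivity of $t_{k+1}$, i.e.\ that one takes the positive root of the quadratic, is stated before concluding monotonicity).
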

\begin{proof}
Let $t_i = \frac{1}{\theta_i}$, then the update $\frac{\eta_i}{\theta_{i}^2} = \frac{\eta_{i+1}}{\theta_{i+1}^2} - \frac{\eta_{i+1}}{\theta_{i+1}}$ is equivalent to
\begin{align*}
    t_{i+1}^2 - t_{i+1} - \frac{\eta_i}{\eta_{i+1}}t_i^2 = 0, \text{ i.e., } t_{i+1} = \frac{1+\sqrt{1+4\frac{\eta_i}{\eta_{i+1}}t_i^2}}{2}.
\end{align*}
Therefore,
\begin{align*}
    &\frac{1}{2} + \frac{1}{2}\left(1+2\sqrt{\frac{\eta_i}{\eta_{i+1}}}t_i \right)\geq t_{i+1} \geq \frac{1}{2} + \sqrt{\frac{\eta_i}{\eta_{i+1}}}t_i.
\end{align*}
which further implies 
    $\sqrt{\eta_{i+1}} \geq \sqrt{\eta_{i+1}}t_{i+1} - \sqrt{\eta_i}t_i \geq \frac{1}{2}\sqrt{\eta_{i+1}}$. Taking summation over $i=0,\ldots, k-1$, this leads to
    $$\sum_{i=0}^{k-1}\sqrt{\eta_{i+1}} \geq \sqrt{\eta_k}t_k - \sqrt{\eta_0}t_0 \geq \frac{1}{2}\sum_{i=0}^{k-1}\sqrt{\eta_{i+1}}.$$
    Therefore, 
    $$\frac{\sum_{i=0}^{k}\sqrt{\eta_i}}{\sqrt{\eta_k}} \geq t_k \geq \frac{\sum_{i=0}^{k}\sqrt{\eta_i}}{2\sqrt{\eta_k}}$$
    and we obtain the desired result. 
\end{proof}

From this result, we can conclude that,
\begin{align*}
    d(\lambda) - d(\lambda_{T}) \leq \frac{4GD_h(\lambda, \lambda_0)}{\left(\sum_{k=0}^{T-1}\sqrt{\eta_k}\right)^2}, \quad \forall \lambda \in \Lambda,
\end{align*}
which is the same as the previous case when $A\to +\infty$ (namely $\theta_0 = 1$).


\section{Accelerated Bregman Augmented Lagrangian Method}\label{sec:acc-BALM}

Applying the acc-BPP algorithms to the dual problem associated with the linearly constrained convex programs would then lead to accelerated versions of BALM. We present in \algref{Accelerated BALM}, an accelerated BALM algorithm based on~\algref{Accelerated BPP3}, denoted as acc-BALM. As an immediate result, the dual sequence from acc-BALM converges in the rate of $\cO\big(1/(\sum_{k=0}^{T-1}\sqrt{\eta_k})^2\big)$, which improves over the $\cO(1/\sum_{k=0}^{T-1}\eta_k)$ of BALM. However, as discussed in the introduction, algorithms with an accelerated rate of dual convergence does not necessarily exhibit accelerated primal convergence. For example, the accelerated algorithm of ALM established in~\cite{he2010acceleration, ke2017accelerated, kang2013accelerated} with constant proximal parameters only ensures a $\cO(1/T^2)$ rate for the dual convergence, namely, $L(x^*,\lambda^*)-L(x_T,\lambda_T)\leq \cO(1/T^2)$,  whereas the primal sequence converges only in the rate of $\cO(1/T)$. 

Below we show that the proposed acc-BALM algorithm based on the previous acceleration scheme also ensures acceleration on the primal convergence.  From Section~\ref{sec:ergodic convergence}, we know that the key to prove primal convergence is to bound $L(\tilde{x}_T, \lambda) - L(x, \tilde{\lambda}_T)$, which further implies bounds for the primal objective  $|f(\tilde{x}_T)-f(x^*)|$ and feasibility violation $\|A\tilde{x}_T-b\|$. The next theorem establishes the primal convergence rate for \algref{Accelerated BALM}. 

\begin{algorithm}[ht!]
\KwIn{$\lambda_0 \in \Lambda, v_0=\lambda_0, G, \theta_0=1 , \{\eta_k\}_{k\geq0}$}
\nl \For{$k\geq 0$}{
\nl $y_k = \theta_k v_k + (1-\theta_k)\lambda_k$\;
\nl $x_{k+1} \in \arg\min\left\{f(x) + \max_{\lambda\in \Lambda}\{\lambda^\top (Ax-b) - \frac{1}{\eta_{k}}D_h(\lambda, y_k)\}\right\}$\;
\nl $\lambda_{k+1} = \arg\max_{\lbd\in \Lambda}\{\lbd^\top (A_{k+1}-b) - \frac{1}{\eta_k}D_h(\lbd, y_k)\}$\;
\nl $v_{k+1} = \arg\max_{\lambda\in \Lambda}\left\{- GD_h(\lambda, \lambda_0) + \sum_{j=0}^k \frac{\eta_j}{\theta_j}\left(d(\lambda_{j+1}) + \frac{1}{\eta_j}(\nabla h(\lambda_{j+1}) - \nabla h(y_j))^\top (\lambda-\lambda_{j+1})\right)  \right\}$\;
\nl Update $\theta_{k+1}$ such that $\sum_{j=0}^{k+1}\frac{\eta_j}{\theta_j} = \frac{\eta_{k+1}}{\theta_{k+1}^2}$, satisfied by $\frac{\eta_k}{\theta_{k}^2} = \frac{\eta_{k+1}}{\theta_{k+1}^2} - \frac{\eta_{k+1}}{\theta_{k+1}}$.
}
    \caption{{\bf  acc-BALM} \label{Accelerated BALM}}
\end{algorithm}


\begin{theorem}
Denote
$$\tilde{x}_{T} = \frac{\sum_{k=0}^{T-1}(\eta_k/\theta_k)x_{k+1}}{\sum_{k=0}^{T-1}\eta_k/\theta_k},\; \tilde{\lbd}_{T} = \frac{\sum_{k=0}^{T-1}(\eta_k/\theta_k)\lbd_{k+1}}{\sum_{k=0}^{T-1}\eta_k/\theta_k}.$$ Then for any $x\in \mathcal{X}, \lambda \in \Lambda$, we have
\begin{align}
    L(\tilde{x}_{T}, \lambda) - L(x, \tilde{\lbd}_{T}) \leq \frac{GD_h(\lbd, \lbd_0) + GD_h(\lbd^*, \lbd_0)\sum_{k=0}^{T-1}\theta_k}{S_{T-1}}, 
\end{align}
where $S_{T-1} = \sum_{k=0}^{T-1}\eta_k/\theta_k$.
\end{theorem}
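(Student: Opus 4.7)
The plan is to mimic the Section~\ref{sec:ergodic convergence} analysis but with the proximal center being $y_k$ instead of $\lambda_k$, and with weights $\eta_k/\theta_k$ chosen to match the dual averaging scheme of \algref{Accelerated BPP3}. The acc-BALM iterates $(x_{k+1},\lambda_{k+1})$ are a saddle point of the same subproblem as in BALM with $\lambda_k$ replaced by $y_k$, so the proof of Lemma~\ref{lma:1step} applies verbatim and yields
\[
  L(x_{k+1},\lambda)-L(x,\lambda_{k+1})\;\leq\;\frac{1}{\eta_k}\bigl(\nabla h(\lambda_{k+1})-\nabla h(y_k)\bigr)^\top(\lambda-\lambda_{k+1}),\qquad\forall x\in\cX,\lambda\in\Lambda.
\]
First, I would multiply this inequality by $\eta_k/\theta_k$ and sum from $k=0$ to $T-1$. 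The convexity of $L(\cdot,\lambda)$ in $x$ and linearity of $L(x,\cdot)$ in $\lambda$, together with the definitions of $\tilde x_T$ and $\tilde\lambda_T$, give
\[
  S_{T-1}\bigl[L(\tilde x_T,\lambda)-L(x,\tilde\lambda_T)\bigr]\;\leq\;\sum_{k=0}^{T-1}\frac{1}{\theta_k}\bigl(\nabla h(\lambda_{k+1})-\nabla h(y_k)\bigr)^\top(\lambda-\lambda_{k+1}).
\]

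Next, I would invoke the core inequality established in Theorem~\ref{thm:accdual}, namely $\tilde\phi_k(\lambda)\leq S_k d(\lambda_{k+1})$, and rearrange it to obtain a direct bound on exactly the weighted sum above:
\[
  \sum_{j=0}^k\frac{1}{\theta_j}\bigl(\nabla h(\lambda_{j+1})-\nabla h(y_j)\bigr)^\top(\lambda-\lambda_{j+1})\;\leq\;GD_h(\lambda,\lambda_0)+\sum_{j=0}^k\frac{\eta_j}{\theta_j}\bigl[d(\lambda_{k+1})-d(\lambda_{j+1})\bigr].
\]
Applying this with $k=T-1$ and using $d(\lambda_T)\leq d(\lambda^*)$ yields
\[
  S_{T-1}\bigl[L(\tilde x_T,\lambda)-L(x,\tilde\lambda_T)\bigr]\;\leq\;GD_h(\lambda,\lambda_0)+\sum_{k=0}^{T-1}\frac{\eta_k}{\theta_k}\bigl[d(\lambda^*)-d(\lambda_{k+1})\bigr].
\]
To conclude, I would plug in Corollary~\ref{cor:dualconvergence} together with the identity $S_k=\eta_k/\theta_k^2$ (a direct consequence of the $\theta_{k+1}$ recursion starting from $\theta_0=1$) to get $d(\lambda^*)-d(\lambda_{k+1})\leq \theta_k^2 GD_h(\lambda^*,\lambda_0)/\eta_k$. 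Substituting this in telescopes the extra sum into $GD_h(\lambda^*,\lambda_0)\sum_{k=0}^{T-1}\theta_k$, producing the claimed bound.

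The main obstacle, and the only nontrivial bridging step, is the third one: the one-step primal-dual inequality produces a weighted sum of mirror-descent-type inner products, which is \emph{not} directly a Lagrangian residual and which does not telescope cleanly as in the unaccelerated case. The insight is that this is precisely the quantity controlled by $\tilde\phi_k$ in the dual-averaging proof of Theorem~\ref{thm:accdual}, so the acceleration machinery already built for the dual function plugs in verbatim; the only extra ingredient needed is that the residual $d(\lambda^*)-d(\lambda_{k+1})$, weighted by $\eta_k/\theta_k$, aggregates to an $O(\sum_k\theta_k)$-type term rather than something worse, which follows from Corollary~\ref{cor:dualconvergence} and the identity $S_k=\eta_k/\theta_k^2$.
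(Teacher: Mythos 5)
Your proposal is correct and follows essentially the same route as the paper: the one-step saddle-point inequality from Lemma~\ref{lma:1step} (with center $y_k$), weighted by $\eta_k/\theta_k$ and summed via convexity/linearity of $L$, then the rearranged inequality $\tilde\phi_{T-1}(\lambda)\le S_{T-1}d(\lambda_T)$ from Theorem~\ref{thm:accdual} to bound the weighted inner products, and finally Corollary~\ref{cor:dualconvergence} with $S_k=\eta_k/\theta_k^2$ to reduce the residual sum to $GD_h(\lambda^*,\lambda_0)\sum_k\theta_k$. No gaps; this matches the paper's argument step for step.
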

\begin{proof}
Using Theorem~\ref{thm:accdual} and Lemma~\ref{lma:1step}, we have $\forall x\in \mathcal{X}, \lambda \in \Lambda$,
\begin{eqnarray*}
   &&L(\tilde{x}_{T}, \lambda) - L(x, \tilde{\lbd}_{T}) \nonumber\\
    &\leq & \frac{1}{S_{T-1}}{\sum}_{k=0}^{T-1}(\eta_k/\theta_k)\left(L(x_{k+1}, \lambda) - L(x, \lbd_{k+1})\right) \nonumber\\
    &\leq & \frac{1}{S_{T-1}}\Big({\sum}_{k=0}^{T-1}(\eta_k/\theta_k)
    \big(\frac{1}{\eta_k}(\nabla h(\lambda_{k+1}) - \nabla h(y_k))^\top (\lambda - \lambda_{k+1})\big) \Big)\nonumber\\
    &\leq & \frac{1}{S_{T-1}}\Big(GD_h(\lbd, \lbd_0) + {\sum}_{k=0}^{T-1}(\eta_k/\theta_k)(d(\lbd_{T}) - d(\lbd_{k+1})) \Big)\nonumber
\end{eqnarray*}
where the first inequality uses the fact that $L(x,\lambda)$ is convex in $x$ and linear in $\lambda$, the second inequality applies Lemma~\ref{lma:1step}, and the third inequality comes from Theorem~\ref{thm:accdual}.  From Corollary~\ref{cor:dualconvergence}, we have $d(\lambda_{T})-d(\lbd_{k+1})\leq d(\lbd^*) - d(\lbd_{k+1})\leq \frac{\theta_k^2GD_h(\lambda^*, \lambda_0)}{\eta_k},\forall k$. It then leads to the desired result.
\end{proof}

The following result can be obtained following the same proof as Theorem \ref{thm:primalexact}.
\begin{corollary}\label{thm:accprimalexact}
Define $\rho_*=2\|\lambda^*\|+1$.  \algref{Accelerated BALM} satisfies that 
\begin{enumerate}
  \item[(a)] For the equality constrained problem \eq{eq:linear-equality-problem}, 
  \begin{align*}\label{eq:accbound_equality_case}
\max\left\{ |f(\tilde{x}_{T}) - f(x^*)|, \|A\tilde{x}_{T}-b\|_{\star}\right\}
\leq \frac{\max\limits_{\lambda \in \mathcal{B}_{\rho_*}}GD_h(\lbd, \lbd_0)(1+\sum_{k=0}^{T-1}\theta_k)}{S_{T-1}},
\end{align*}
where $\mathcal{B}_{\rho} = \{\lambda\in\RR^m : \|\lambda\|\leq \rho\}$ and $\|\cdot\|_\star$ is the dual norm of $\|\cdot\|$.
  \item[(b)] For the inequality constrained problem \eq{eq:linear-inequality-problem}, 
  \begin{align*}
\max\left\{ |f(\tilde{x}_{T}) - f(x^*)|, \|[A\tilde{x}_{T}-b]_+\|_{\star}\right\}
\leq \frac{\max\limits_{\lambda \in \mathcal{B}^+_{\rho_*}}GD_h(\lbd, \lbd_0)(1+\sum_{k=0}^{T-1}\theta_k)}{S_{T-1}},
\end{align*}
where $\mathcal{B}^+_{\rho} = \{\lambda\in\RR^m :\lambda \geq 0, \|\lambda\|\leq \rho\}$.
\end{enumerate}
\end{corollary}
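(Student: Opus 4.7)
The plan is to mimic the proof of Theorem~\ref{thm:primalexact}, using the Lagrangian residual bound from the preceding theorem in place of \eq{eq:Lagrange_residual}. I will carry out only the inequality case (b); case (a) is analogous with $\mathcal{B}_\rho$ replacing $\mathcal{B}^+_\rho$.

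First, I would apply the preceding theorem with $x=x^*$. Since $\tilde{\lambda}_T \in \Lambda = \RR^m_+$ and $Ax^*-b \leq 0$, we have $\tilde{\lambda}_T^\top(Ax^*-b)\leq 0$, so
\begin{align*}
f(\tilde{x}_T) - f(x^*) + \lambda^\top(A\tilde{x}_T - b) \;\leq\; L(\tilde{x}_T,\lambda) - L(x^*,\tilde{\lambda}_T)
\;\leq\; \frac{GD_h(\lambda,\lambda_0) + GD_h(\lambda^*,\lambda_0)\sum_{k=0}^{T-1}\theta_k}{S_{T-1}},
\end{align*}
valid for all $\lambda\in\Lambda$. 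Next, I would take the maximum of the left-hand side over $\lambda\in\mathcal{B}^+_{\rho}$ with $\rho=\rho_*=2\|\lambda^*\|+1$; on the left this produces $f(\tilde{x}_T)-f(x^*)+\rho_*\|[A\tilde{x}_T-b]_+\|_\star$. On the right, since $\|\lambda^*\|\leq \rho_*$, we have $\lambda^*\in \mathcal{B}^+_{\rho_*}$, so both $D_h(\lambda,\lambda_0)$ (for any $\lambda\in \mathcal{B}^+_{\rho_*}$) and $D_h(\lambda^*,\lambda_0)$ are upper bounded by $\max_{\lambda\in\mathcal{B}^+_{\rho_*}}D_h(\lambda,\lambda_0)$. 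Thus
\begin{align*}
f(\tilde{x}_T)-f(x^*)+\rho_*\|[A\tilde{x}_T-b]_+\|_\star \;\leq\; \frac{G\max_{\lambda\in\mathcal{B}^+_{\rho_*}}D_h(\lambda,\lambda_0)\bigl(1+\sum_{k=0}^{T-1}\theta_k\bigr)}{S_{T-1}}.
\end{align*}

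Then, exactly as in the proof of Theorem~\ref{thm:primalexact}, I would combine this with the reduced variational inequality \eq{eq:VIreduced} evaluated at $x=\tilde{x}_T$, which yields
$$f(x^*)-f(\tilde{x}_T) - \|\lambda^*\|\,\|[A\tilde{x}_T-b]_+\|_\star \;\leq\; f(x^*)-f(\tilde{x}_T) - {\lambda^*}^\top(A\tilde{x}_T-b) \;\leq\; 0.$$
Adding the two displayed inequalities and using $\rho_*-\|\lambda^*\| = \|\lambda^*\|+1 \geq 1$ produces the constraint violation bound, and then combining the lower bound $f(\tilde{x}_T)-f(x^*)\geq -\|\lambda^*\|\,\|[A\tilde{x}_T-b]_+\|_\star$ with the constraint bound gives $|f(\tilde{x}_T)-f(x^*)|$ bounded by the same quantity. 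Taking the max of the two delivers \eq{eq:bound_inequality_case}'s accelerated analogue.

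The only mild subtlety, which I do not expect to be a real obstacle, is the presence of the extra term $GD_h(\lambda^*,\lambda_0)\sum_{k=0}^{T-1}\theta_k$ in the residual bound compared to the unaccelerated case: it is not a function of the maximization variable $\lambda$, so one must absorb it into the supremum bound via $\lambda^*\in \mathcal{B}^+_{\rho_*}$ rather than carrying it separately. Once this bookkeeping is done, the rest of the argument is a direct transcription of the proof of Theorem~\ref{thm:primalexact}, and case (a) follows identically after replacing $[A\tilde{x}_T-b]_+$ by $A\tilde{x}_T-b$ and $\mathcal{B}^+_{\rho}$ by $\mathcal{B}_{\rho}$ (since in the equality case $Ax^*=b$ makes $\tilde{\lambda}_T^\top(Ax^*-b)=0$ unconditionally).
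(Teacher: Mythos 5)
Your proposal is correct and matches the paper's intended argument: the paper gives no separate proof, stating only that the corollary ``can be obtained following the same proof as Theorem~\ref{thm:primalexact},'' which is exactly what you carry out. Your handling of the one nontrivial point — absorbing the extra $GD_h(\lambda^*,\lambda_0)\sum_{k=0}^{T-1}\theta_k$ term into the supremum via $\lambda^*\in\mathcal{B}^+_{\rho_*}$ — is the right bookkeeping and yields the stated bound.
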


\paragraph{Discussions} From Proposition~\ref{prop:thetabound}, it is clear that $S_{T-1} \geq \left(\frac{\sum_{k=0}^{T-1}\sqrt{\eta_k}}{2}\right)^2$, and $\sum_{k=0}^{T-1}\theta_k \leq \sum_{k=0}^{T-1} \frac{2\sqrt{\eta_k}}{\sum_{i=0}^k\sqrt{\eta_i}}$. We now discuss the consequences of special choices of $\{\eta_k\}_{k\geq 0}$. In particular, we consider choosing $\eta_k = \eta(k+1)^p, k=0,1,\cdots$, where $p\geq 0$. First observe that
\begin{align*}
    \sum_{k=0}^{T-1}\sqrt{\eta_k} \geq \sum_{k=0}^{T-1}\sqrt{\eta} k^{p/2} \geq \sqrt{\eta} \int_0^{T} x^{p/2}dx \geq \frac{\sqrt{\eta}T^{p/2 + 1}}{p/2 + 1},
\end{align*}
thus $S_{T-1} \geq \frac{\eta T^{p+2}}{(p+2)^2}$. In addition, we have
\begin{align*}
    \sum_{k=0}^{T-1}\theta_k \leq \sum_{k=0}^{T-1}\frac{(p+2)(k+1)^{p/2}}{(k+1)^{p/2 + 1}} \leq\sum_{k=0}^{T-1}\frac{p+2}{k+1} \leq (p+2)\ln(T).
\end{align*}
Therefore, when the proximal parameters are set to $\eta_k = \eta(k+1)^p$, the primal convergence rate of acc-BALM becomes $\cO \left(\frac{\ln T}{T^{p+2}}\right)$, whereas the primal convergence rate of BALM is $\cO\left(\frac{1}{T^{p+1}}\right)$. In particular, when the proximal parameters are fixed to a constant, namely $p=0$, acc-BALM improves the primal convergence from $\cO(\frac{1}{T})$ to $\cO\left(\frac{\ln(T)}{T^2}\right)$.  When the Bregman divergence is set to the Euclidean distance, and the constraints are linear equality constraints, our result recovers the existing result in \cite{nedelcu2014computational} as a special case. However, it is worth mentioning that our acceleration scheme and proof techniques are fairly general, whereas the accelerated algorithm and the convergence proof in \cite{nedelcu2014computational} heavily rely on the structure of linear equality constraints and only apply to classical ALM. We believe our result gives the first primal convergence analysis of accelerated ALM for problems with inequality constraints. 

\section{Numerical Experiments}\label{sec:experiments}
In this section, we test the numerical performance of the proposed algorithms, particularly, acc-BPP in~\algref{Accelerated BPP2} and acc-BALM in~\algref{Accelerated BALM}, and compare to their non-accelerated counterparts.

 We first consider two different convex problems:
\begin{equation}\label{eq:problem1}
  \min_{x\in \Delta_n}\;f_1(x):=\max\{c_1^\top x,\cdots, c_m^\top x\}, 
\end{equation}
\begin{equation}\label {eq:problem2}
   \min_{x\in \Delta_n} f_2(x):={\sum}_{j=1}^m\exp(a_j^\top x), 
\end{equation}
where $\Delta_n:=\{x\in\mathbb{R}^n: {\sum}_{i=1}^nx_i = 1, x\geq 0\}$ is the simplex set. These two examples represent nonsmooth and smooth convex objectives, respectively. For both problems, we consider $m=15, n=20$. For problem~\eq{eq:problem1}, each $c_j$ is randomly generated from $U[-1,1]^n$. For problem~\eq{eq:problem2}, each $a_j$ is also uniformly generated from $U[-1,1]^n$.

We run BPP and acc-BPP to solve the above two problems. We choose the Bregman function $h(x) = \sum_{i=1}^nx_i\ln(x_i):\Delta_n\to\mathbb{R}$, and the Bregman divergence is $D_h(x,y) = \sum_{i=1}^n(x_i \ln(x_i/y_i)-x_i+y_i)$, which is also known as the \textit{generalized KL-divergence}.  The Bregman operators (i.e.,  the optimal solutions to the proximal minimization steps) are obtained using ECOS conic programming solver version 2.0.7~\cite{domahidi2013ecos}. Even though $D_h(x,y)$ here does not strictly satisfy the triangle-scaling property \eq{eq:trianglescaling}, we simply setting $G=1$ in the experiment and consider two different choices of proximal parameters: $\eta_k=1$ and $\eta_k=k+1$. Results are summarized in Figure~\ref{fig1}, which  indicates that  acc-BPP achieves  faster convergences than BPP under both settings.

\begin{figure}[!ht]
\centering
\begin{minipage}{.5\textwidth}
 \centering
 \includegraphics[width=1.0\linewidth]{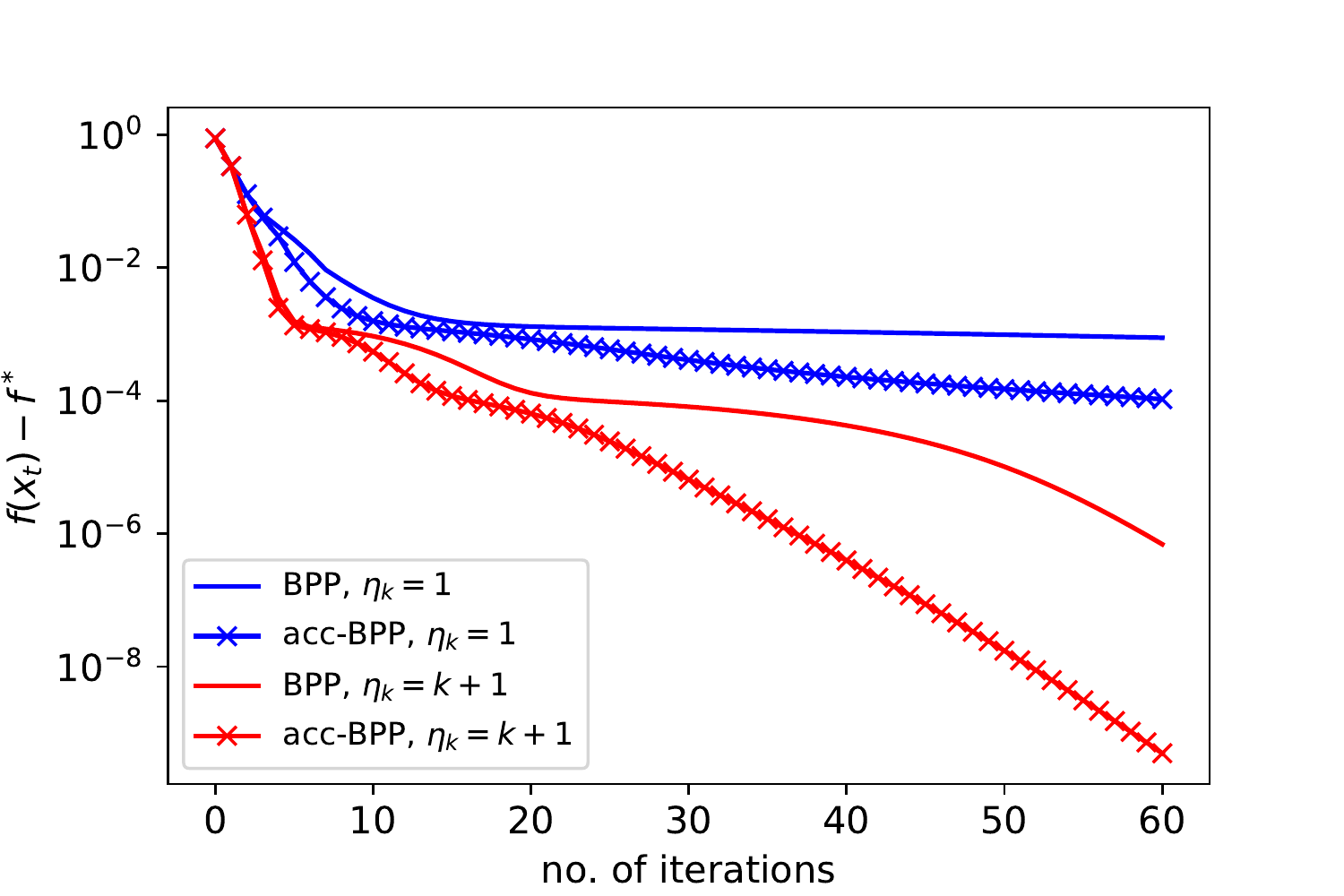}
 \captionsetup{font={small}, margin={0cm,0cm}}
 \caption*{(a) problem \eq{eq:problem1}}
\end{minipage}%
\begin{minipage}{.5\textwidth}
 \centering
 \includegraphics[width=1.0\linewidth]{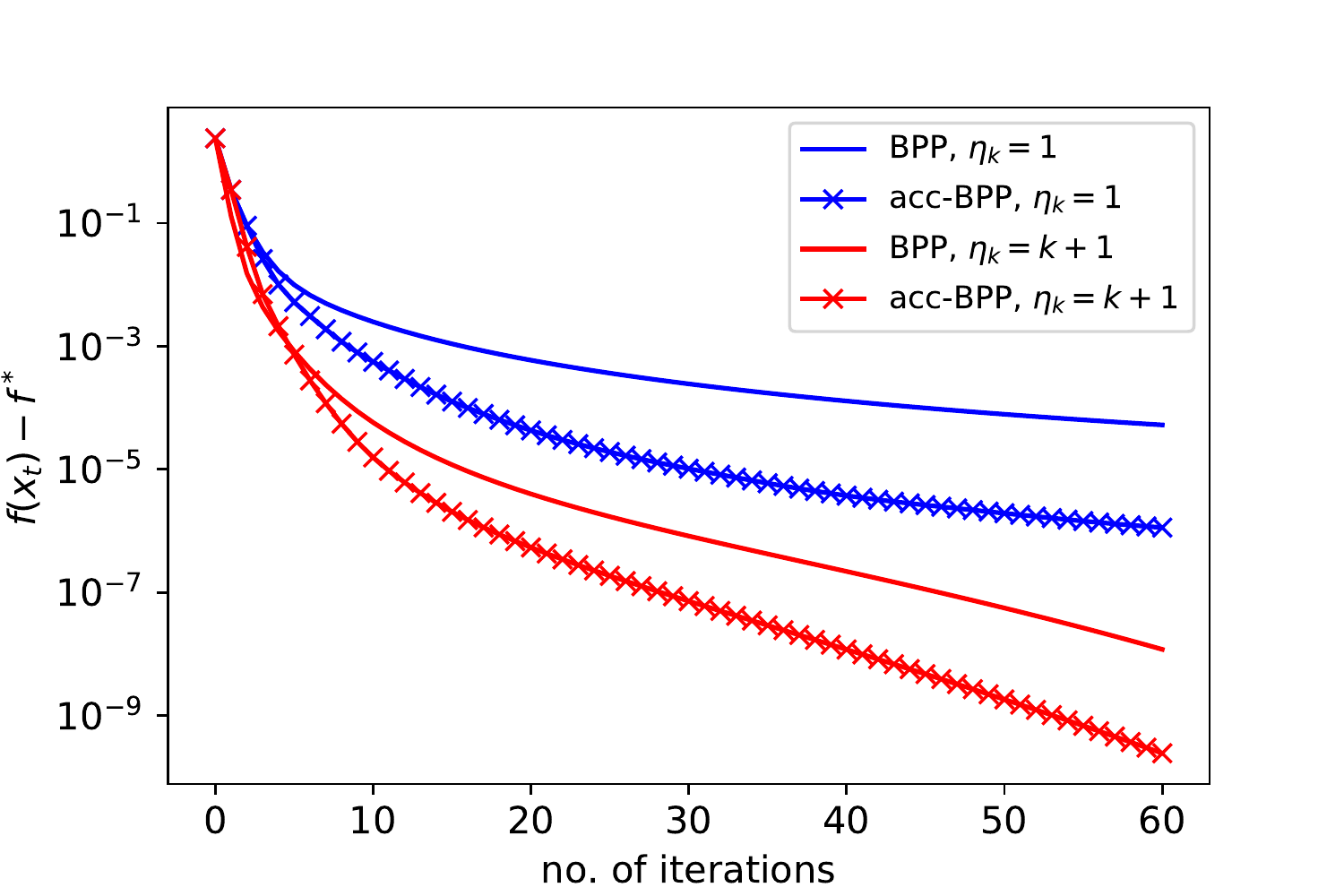}
 \captionsetup{font={small}, margin={0cm,0cm}}
 \caption*{(b) problem \eq{eq:problem2}}
\end{minipage}
\captionsetup{font={small}}
\caption{Comparison of BPP and acc-BPP on convex  problems \eq{eq:problem1} and \eq{eq:problem2}} 
\label{fig1}
\end{figure}

\begin{figure}[ht!]
\centering
\begin{minipage}{.5\textwidth}
 \centering
 \includegraphics[width=1.0\linewidth]{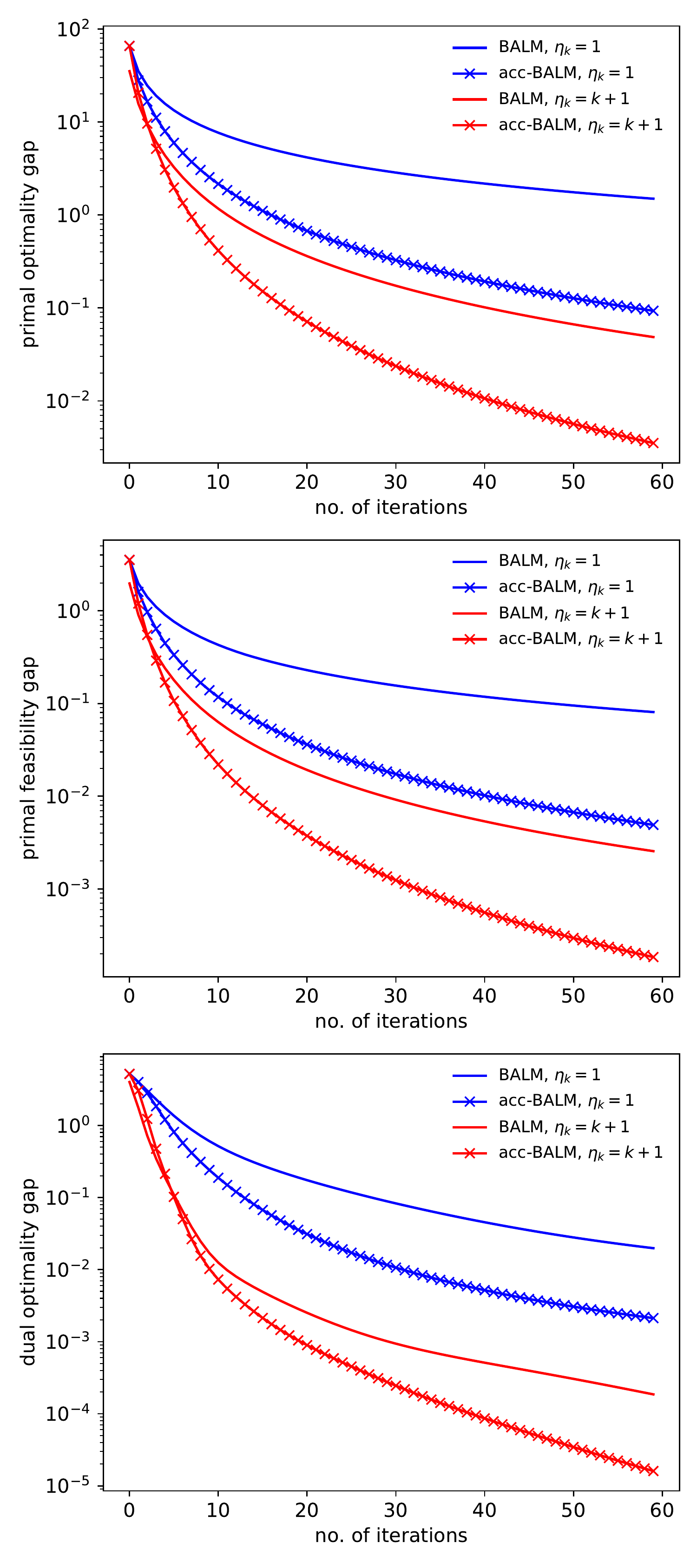}
 \captionsetup{font={small}}
 \caption*{(a) problem \eq{eq:problem3}}
\end{minipage}%
\begin{minipage}{.5\textwidth}
 \centering
 \includegraphics[width=1.0\linewidth]{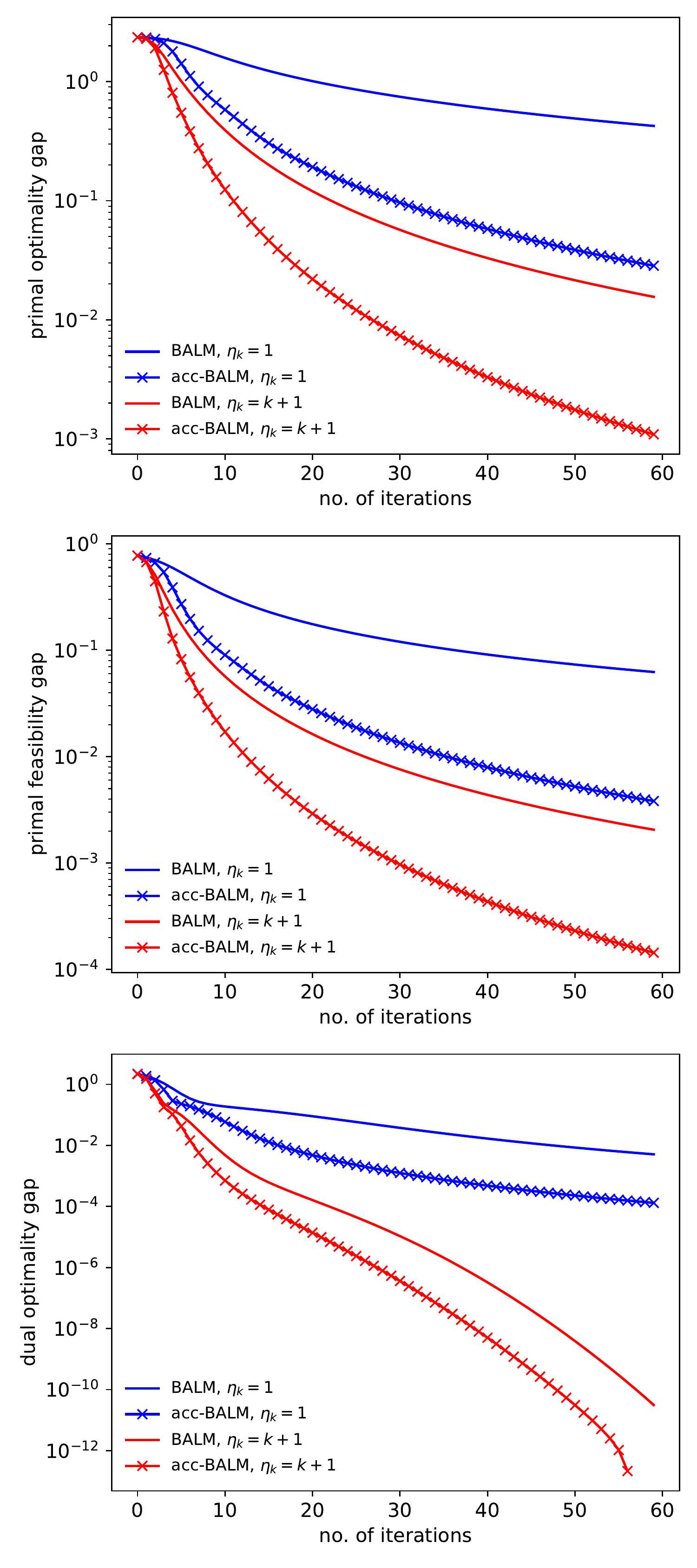}
 \captionsetup{font={small}}
 \caption*{(b) problem \eq{eq:problem4}}
\end{minipage}
\captionsetup{font={small}}
\caption{Comparison of BALM and acc-BALM on convex problems \eq{eq:problem3} and \eq{eq:problem4}\\[1cm]}
\label{fig2}
\end{figure}

Next, we consider another two convex minimization problems with linear inequality constraints for evaluating the performance of BALM and acc-BALM:
\begin{equation}\label{eq:problem3}
\min_x\{c^\top x: Ax\leq b\}, 
\end{equation}
\begin{equation}\label{eq:problem4}
  \min_x\left\{ \frac{1}{2}x^\top W x: Ax\leq b \right\}
\end{equation}
where $A\in \mathbb{R}^{m\times n}$. For both problems, we set $m=150, n=30$. For problem~\eq{eq:problem3}, we first generate an instance of \textit{Markov decision problem}~\cite{puterman2014markov}, where each entry of transition probability is randomly generated from $U[0,1]$ with normalization, and rewards are also uniformly generated from $U[0,1]$. We then consider the linear program formulation associated with this finite MDP. For problem~\eq{eq:problem4}, we set $W = \omega^\top \omega$, where $\omega \sim U[0,2]^n$, and $A \sim U[0,1]^{m\times n}, b \sim U[-1,1]^m$. We set the Bregman divergence to be the generalized KL-divergence for both BALM and acc-BALM. The optimal solutions to the subproblems are obtained using ECOS solver version 2.0.7~\cite{domahidi2013ecos}. Again, we simply setting $G=1$ in the experiment and consider two different choices of proximal parameters: $\eta_k=1$ and $\eta_k=k+1$. Results are summarized in Figure~\ref{fig2}, which clearly indicates that  acc-BALM achieves faster convergences than BALM under both settings.

\section{Conclusions}
\label{sec:conclusions}
We have established the first non-asymptotic primal convergence rate for BALM, which generalizes the classical ALM method. A generic accelerated scheme of the Bregman proximal point method is proposed, which is further used to construct the first accelerated BALM with both improved dual and  primal convergence rates. 
Numerical experiments demonstrate that these accelerated algorithms achieve superior performance in practice. 
For future work, it remains interesting to explore the total iteration complexity of these accelerated algorithms and when subproblems are solved inexactly through some (stochastic) first-order subroutines. 

\vspace{1cm}
\appendix
\noindent{\Large \textbf{Appendix}}
\section{Existing accelerated ALM methods}\label{appendix}
Below, we summarize several existing accelerated ALM schemes for solving linear equality problem \eq{eq:linear-equality-problem}. 
\begin{enumerate}
\item Applying G\"uler's 1st accelerated proximal point method to the Lagrangian  dual \cite{guler1992new, kim2019accelerated, beck2009fast, tseng2008accelerated, lin2018catalyst}:
\begin{align}
\left\{ \begin{array}{l}
x_{k+1} = \arg\min_{x\in \mathcal{X}}\{f(x) + y_k^\top (Ax-b) + \frac{\eta}{2}\|Ax-b\|^2\}\\
\lambda_{k+1} = y_k + \eta (Ax_{k+1} - b)\\
t_{k+1} = \frac{1+\sqrt{1+4t_k^2}}{2}\\
y_{k+1} = \lambda_{k+1} + \frac{t_k - 1}{t_{k+1}}(\lambda_{k+1}-\lambda_k)
\end{array} \right. \label{guler1st}
\end{align}

\item Applying G\"uler's 2nd accelerated proximal point method to the Lagrangian dual \cite{guler1992new, kim2019accelerated, he2010acceleration, ke2017accelerated, kang2013accelerated}:
\begin{align}
    \left\{ \begin{array}{l}
    x_{k+1} = \arg\min_{x\in \mathcal{X}}\{f(x) + y_k^\top (Ax-b) + \frac{\eta}{2}\|Ax-b\|^2\}\\
    \lambda_{k+1} = y_k + \eta (Ax_{k+1} - b)\\
    t_{k+1} = \frac{1+\sqrt{1+4t_k^2}}{2}\\
    y_{k+1} = \lambda_{k+1} + \frac{t_k - 1}{t_{k+1}}(\lambda_{k+1}-\lambda_k) + \frac{t_k}{t_{k+1}}(\lambda_{k+1}-y_k)
\end{array} \right. \label{guler2nd}
\end{align}

\item Applying Nesterov's accelerated dual average method to the augmented Lagrangian dual problem \cite{nesterov2005smooth, devolder2014first, nedelcu2014computational}:
\begin{align}
    \left\{ \begin{array}{l}
x_{k+1} = \arg\min_{x\in \mathcal{X}}\{f(x) + y_k^\top (Ax-b) + \frac{\eta}{2}\|Ax-b\|^2\}\\
\lambda_{k+1} = y_{k} + \eta (Ax_{k+1}-b)\\
t_{k+1} = \frac{1+\sqrt{1+4t_k^2}}{2}\\
y_{k+1} = \left(1-\frac{1}{t_{k+1}}\right)\lambda_{k+1} + \frac{1}{t_{k+1}}\left(\lambda_0 + \eta \sum_{j=0}^k t_j (Ax_{j+1}-b)\right)
\end{array} \right. \label{dualaverage}
\end{align}
\end{enumerate}

\section{Equivalence of \eq{guler1st} and \eq{dualaverage}}
The last step of \eq{dualaverage} can be rewritten as the following (with $v_0 = y_0$)
\begin{align*}
\left\{ \begin{array}{l}
v_{k+1} = v_k + t_k(\lambda_{k+1} - y_k)\\
y_{k+1} = \left(1-\frac{1}{t_{k+1}}\right)\lambda_{k+1} + \frac{1}{t_{k+1}}v_{k+1}
\end{array} \right.
\end{align*}
Eliminate $v_k$ and using the fact that $v_k = t_ky_k - (t_k - 1)\lambda_k$, we have $v_{k+1} = v_k + t_k(\lambda_{k+1} - y_k)$. This implies that
    $$t_{k+1}y_{k+1} - (t_{k+1} - 1)\lambda_{k+1} = t_ky_k - (t_k - 1)\lambda_k + t_k (\lambda_{k+1} - y_k).$$
    Equivalent, $y_{k+1} = \lambda_{k+1} + \frac{t_k-1}{t_{k+1}}(\lambda_{k+1} - \lambda_k)$, which recovers last step of \eq{guler1st}. 

\section{Primal convergence rate of \eq{guler2nd}} Existing proofs only indicate an accelerated rate of dual convergence, yet it remains unclear whether this method could guarantee an improved rate of primal acceleration at the same time. Here we provide a simple counter-example showing that the algorithm described in \eq{guler2nd} could fail to improve the primal convergence. The example we consider is a simple linear program:
\begin{align*}
\min_{x\in\mathbb{R}^n}\{c^\top x: Ax=b\},
\end{align*}
where we assume $A\in \mathbb{R}^{m\times n}$, $m > n$, and $\text{rank}(A) = n$. The corresponding dual problem is
\begin{align*}
\max_{\lambda\in\mathbb{R}^m}\{-\lambda^\top b: A^\top \lambda + c = 0 \}.
\end{align*}
Assume that the problem is feasible, i.e., $b \in \text{range}(A)$. This implies that there is only one feasible solution, which we shall call $x^*$. Clearly, $x^* = (A^\top A)^{-1}A^\top b$, and $c^\top x^* = c^\top (A^\top A)^{-1}A^\top b$. Now \eq{guler2nd} for this specific problem can be written as (with $t_0 = 1, y_0 = \mathbf{0}$)

\begin{align*}
\left\{ \begin{array}{l}
x_{k+1} = (A^\top A)^{-1}(A^\top b - \frac{1}{\eta}(A^\top y_k + c))\\
\lambda_{k+1} = y_k + \eta (Ax_{k+1} - b)\\
t_{k+1} = \frac{1+\sqrt{1+4t_k^2}}{2}\\
y_{k+1} = \lambda_{k+1} + \frac{t_k - 1}{t_{k+1}}(\lambda_{k+1}-\lambda_k) + \frac{t_k}{t_{k+1}}(\lambda_{k+1}-y_k)
\end{array} \right..
\end{align*}

\noindent By substituting $x_{k+1}$, we get
\begin{align*}
\lambda_{k+1} = (I - A(A^\top A)^{-1}A^\top)y_k - A(A^\top A)^{-1}c,
\end{align*}
from this expression we can see that the dual variable $\lambda_1$ already recovers the optimal solution, since $A^\top \lambda_1 + c = 0$, and $-b^\top \lambda_1 = c^\top (A^\top A)^{-1}A^\top b$. We further obtain the update for $y_{k+1}$:
\begin{align*}
    y_{k+1} = (I - A(A^\top A)^{-1}A^\top)&\left(y_k + \frac{t_k-1}{t_{k+1}}( y_k - y_{k-1})\right) - A(A^\top A)^{-1}c \\
    &- \frac{t_k}{t_{k+1}}\left(A(A^\top A)^{-1}A^\top y_k + A(A^\top A)^{-1}c\right).
\end{align*}
To simplify notations, let $z_k = A^\top y_k + c$, we therefore have
\begin{align*}
    z_{k+1} &= -\frac{t_k}{t_{k+1}}z_k = (-1)^{k+1}\frac{t_0}{t_{k+1}}c.
\end{align*}
We can then obtain the  following update for $x_{k+1}$,
\begin{align*}
    x_{k+1} &= (A^\top A)^{-1}(A^\top b - \frac{1}{\eta}(A^\top y_k + c))\\
          &= x^* - \frac{1}{\eta}(A^\top A)^{-1}z_k\\
          &= x^* - \frac{(-1)^k(A^\top A)^{-1}c}{\eta}\frac{t_0}{t_k}.
\end{align*}
This leads to the primal optimality and feasibility gap, 
\begin{align*}
|c^\top(x^*-x_T)| &= \frac{c^\top (A^\top A)^{-1}c}{\eta}\frac{t_0}{t_{T-1}},\\
\|Ax_T-b\| &= \frac{\|A(A^\top A)^{-1}c\|}{\eta}\frac{t_0}{t_{T-1}}.
\end{align*}
Since $\frac{T+1}{2} \leq t_T \leq T+1$, for this problem, \eq{guler2nd} only achieves $\cO(1/T)$ primal convergence rate.

 \section*{Acknowledgments}
 We would like to acknowledge Bo Dai and Donghwan Lee for fruitful discussions, and we thank Lin Xiao and Jonathan Eckstein for insightful comments. 

\bibliographystyle{unsrtnat}  
\bibliography{references.bib}

\end{document}